\def\gronko{\vphantom{\vrule height 10pt}}
\newtheorem{theorem}{Theorem}[section]
\newtheorem{definition}[theorem]{Definition}
\newtheorem{lemma}[theorem]{Lemma}
\newtheorem{remark}[theorem]{Remark}
\def\mapright#1{\smash{\mathop{\longrightarrow}\limits\sp{#1}}}
\begin{document}
\title[Geometric realizations]
 {Geometric realizations of affine K\"ahler curvature models}
\author[M. Brozos-V\'azquez et. al.]{M. Brozos-V\'azquez,
P. Gilkey, and S. Nik\v cevi\'c}
\address{MBV: Department of Mathematics, E. U. Polit\'ecnica, Universidade da Coru\~na, Spain\\
E-mail: mbrozos@udc.es}
\address{PG: Mathematics Department, University of Oregon\\
  Eugene OR 97403 USA\\
  E-mail: gilkey@uoregon.edu}
\address{SN: Mathematical Institute, Sanu,
Knez Mihailova 36, p.p. 367,
11001 Belgrade,
Serbia.\\ Email: stanan@mi.sanu.ac.rs}
\keywords{geometric realization, affine manifold, curvature operator, affine curvature model, affine K\"ahler
manifold}
\begin{abstract}{We show that every K\"ahler affine curvature model can be realized geometrically.\\
{\it MSC:} 53B20}\end{abstract}
\maketitle
\centerline{\bf Dedicated to Heinrich Wefelscheid}\centerline{\bf on the occasion of his 70th birthday}
\section{Introduction}
The study of curvature is central in modern differential geometry. 
One often first considers a
problem abstractly in a purely algebraic framework and then subsequently passes to the geometrical context
by examining the appropriate geometric realization problem.

The nature of the realization question of course depends on the problem under consideration and its
context. Among the realization problems many of them arise in very natural fashions. We
shall work in the affine setting and consider the curvature associated to a K\"ahler
affine connection. This curvature operator satisfies some known universal symmetries (see
Equations \eqref{eqn-1.a} and
\eqref{eqn-1.i} below). We study the inverse problem. We shall consider a tensor
which satisfies those symmetries and show that it is indeed the curvature tensor associated to an affine
connection (Theorem
\ref{thm-1.8}). We now briefly survey other
results in this field to put our result in context.

\subsection{Affine geometry} The pair $(M,\nabla)$ is said to be an {\it affine manifold} if $\nabla$ is a
torsion free connection on the tangent bundle
$TM$ of a smooth $m$-dimensional manifold $M$. Let
$$\mathcal{R}(x,y):=\nabla_x\nabla_y-\nabla_y\nabla_x-\nabla_{[x,y]}$$
be the associated {\it curvature operator} which has the symmetries:
\begin{equation}\label{eqn-1.a}
\mathcal{R}(x,y)=-\mathcal{R}(y,x)\quad\text{and}\quad
\mathcal{R}(x,y)z+\mathcal{R}(y,z)x+\mathcal{R}(z,x)y=0\,.
\end{equation}

It is convenient to work in a purely algebraic context. Let $V$ be a real $m$-dimensional
vector space. We say that
$\mathcal{A}\in\operatorname{End}(V)\otimes V^*\otimes V^*$ is a {\it generalized curvature
operator} if it satisfies the symmetries of Equation (\ref{eqn-1.a}). Let
$\mathfrak{A}=\mathfrak{A}(V)$ be the linear vector space of all generalized curvature tensors on $V$. If
$\mathcal{A}\in\mathfrak{A}$,  the structure
$(V,\mathcal{A})$ is said to be an {\it affine curvature model}. We say that an affine curvature
model $(V,\mathcal{A})$ is {\it geometrically realizable by an affine manifold} if there exists an affine manifold
$(M,\nabla)$, a point
$P\in M$, and an isomorphism
$\Xi:V\rightarrow T_PM$ so that
$\Xi^*\mathcal{R}_P=\mathcal{A}$. The following result
\cite{BNGS06} permits one to pass from the algebraic to the geometric context. It shows that the relations of Equation
(\ref{eqn-1.a}) generate the universal symmetries of
the curvature operator associated to a torsion free connection:

\begin{theorem}\label{thm-1.1}
Every affine curvature model is geometrically realizable by an affine manifold.
\end{theorem}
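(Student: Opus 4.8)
The plan is to realize the model on a neighborhood of the origin in $\mathbb{R}^m$ by a connection whose Christoffel symbols are linear functions of the coordinates. Fix a basis $\{e_1,\dots,e_m\}$ of $V$ and write the components of $\mathcal A$ as $\mathcal A(e_i,e_j)e_k=\mathcal A_{ijk}{}^l e_l$. Take $M$ to be a neighborhood of $0$ in $\mathbb{R}^m$ with the standard coordinates $(x^1,\dots,x^m)$, set $P=0$, and let $\Xi\colon V\to T_PM$ be the isomorphism $e_i\mapsto\partial_{x^i}|_0$. It then suffices to produce a torsion free connection $\nabla$ on $M$ whose curvature operator at $P$ has components $\mathcal A_{ijk}{}^l$ in the frame $\partial_{x^i}$. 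I would look for $\nabla$ with $\Gamma_{ij}{}^k(x)=c_{ij\ell}{}^k\,x^\ell$ for constants $c_{ij\ell}{}^k$; imposing the symmetry $c_{ij\ell}{}^k=c_{ji\ell}{}^k$ makes $\nabla$ torsion free automatically, and since these coefficients are in fact defined on all of $\mathbb{R}^m$ one may as well take $M=\mathbb{R}^m$.

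The second step is the curvature computation at $P$. Because $\Gamma$ vanishes at the origin, the quadratic terms in the curvature formula drop out there, so $\mathcal R_{ijk}{}^l(P)=\partial_{x^i}\Gamma_{jk}{}^l-\partial_{x^j}\Gamma_{ik}{}^l=c_{jki}{}^l-c_{ikj}{}^l$. Thus the geometric problem reduces to the purely algebraic one of choosing constants $c_{ij\ell}{}^k$, symmetric in $i,j$, with $c_{jki}{}^l-c_{ikj}{}^l=\mathcal A_{ijk}{}^l$. A natural candidate is a fixed multiple of the partial symmetrization of $\mathcal A$ in two of its lower slots, say $c_{ij\ell}{}^k=\kappa\,(\mathcal A_{i\ell j}{}^k+\mathcal A_{j\ell i}{}^k)$, which is manifestly symmetric in $i,j$; here $\kappa$ is a universal constant to be pinned down.

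Finally I would verify that this works by feeding in both symmetries of \eqref{eqn-1.a}. Substituting the candidate into $c_{jki}{}^l-c_{ikj}{}^l$ and using the antisymmetry $\mathcal A(x,y)=-\mathcal A(y,x)$ to collect the four resulting terms, the difference becomes a multiple of the cyclic sum $\mathcal A_{ijk}{}^l+\mathcal A_{jki}{}^l+\mathcal A_{kij}{}^l$ plus a leftover multiple of $\mathcal A_{ijk}{}^l$; the first Bianchi identity $\mathcal A(x,y)z+\mathcal A(y,z)x+\mathcal A(z,x)y=0$ kills the cyclic sum, and one is left with $-3\kappa\,\mathcal A_{ijk}{}^l$, so $\kappa=-\tfrac13$ does the job. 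The only real obstacle is the bookkeeping of index orders and signs, so that the Bianchi identity enters with exactly the right coefficient; conceptually there is nothing that can obstruct the realization, since the curvature operator of \emph{any} torsion free connection already satisfies \eqref{eqn-1.a}, so those relations are both necessary and, by this construction, sufficient.
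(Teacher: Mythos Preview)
Your argument is correct. The linear-Christoffel ansatz $\Gamma_{ij}{}^k(x)=c_{ij\ell}{}^k x^\ell$ with $c_{ij\ell}{}^k=-\tfrac13(\mathcal A_{i\ell j}{}^k+\mathcal A_{j\ell i}{}^k)$ is torsion free, and the computation
\[
c_{jki}{}^l-c_{ikj}{}^l=\kappa\bigl(-2\mathcal A_{ijk}{}^l+\mathcal A_{kij}{}^l+\mathcal A_{jki}{}^l\bigr)=-3\kappa\,\mathcal A_{ijk}{}^l
\]
(using antisymmetry in the first two slots and then the Bianchi identity) goes through exactly as you say.

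Note, however, that the present paper does not itself prove Theorem~\ref{thm-1.1}; it is quoted from \cite{BNGS06}. Your argument is in fact the standard proof given there, and it is also the template underlying the constructions the paper uses for its own main result: Definition~\ref{defn-3.1} and Lemma~\ref{lem-3.1} build K\"ahler connections from Christoffel data $\Theta_{ijk}$ that are linear in the coordinates and vanish at the origin, so that the curvature at $0$ is purely the first-derivative term $d\Theta$. So your approach is both correct and essentially the same as the one the paper relies on.
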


\subsection{Weyl geometry} We consider a {\it mixed structure}
$(M,g,\nabla)$ where
$g$ is a Riemannian metric on $M$ and where $\nabla$ is a torsion free connection on $M$. We use the metric
to lower indices and define $R\in\otimes^4T^*M$ by setting:
$$R(x,y,z,w)=g(\mathcal{R}(x,y)z,w)\,.$$
The symmetries of Equation (\ref{eqn-1.a}) then become:
\begin{equation}\label{eqn-1.b}
\begin{array}{l}
R(x,y,z,w)+R(y,x,z,w)=0,\\
R(x,y,z,w)+R(y,z,x,w)+R(z,x,y,w)=0\,.
\end{array}\end{equation}
There are two different traces it is natural to consider in this setting. Let $\{e_i\}$ be a local orthonormal frame
for $TM$. We adopt the {\it Einstein convention} and sum over repeated indices to define:
\begin{equation}\label{eqn-1.c}
\rho_{13}(x,y):=R(e_i,x,e_i,y)\quad\text{and}\quad\rho_{14}(x,y):=R(e_i,x,y,e_i)\,.
\end{equation}
The {\it scalar curvature $\tau$} is then defined by setting:
$$\tau(A):=\rho_{14}(e_i,e_i)=-\rho_{13}(e_i,e_i)\,.$$
The tensor $\rho_{13}$, the tensor $\rho_{14}$, and the scalar curvature $\tau$ are all independent of the particular
orthonormal frame which is chosen.
Since one may also express $\rho_{14}(x,y)=\operatorname{Tr}(z\rightarrow\mathcal{R}(z,x)y)$, $\rho_{14}$ is defined in
the affine setting. By contrast, $\rho_{13}$ and $\tau$ are dependent on the metric.

The mixed structure $(M,g,\nabla)$ is
said to be a {\it Weyl manifold} \cite{W22} if there exists a smooth $1$-form $\phi$ so that
$$\nabla g=-2\phi\otimes g\,.$$
If $(M,g,\nabla)$ is a Weyl manifold, then there is an additional curvature symmetry which is satisfied
(see \cite{H94}):
\begin{equation}\label{eqn-1.d}
R(x,y,z,w)+R(x,y,w,z)=\textstyle\frac2m\{\rho_{14}(y,x) - \rho_{14}(x,y)\}g(z,w)\,.
\end{equation}

We fix a positive definite metric $\langle\cdot,\cdot\rangle$ on $V$ henceforth. We say that $A\in\mathfrak{A}$ is
a {\it Weyl generalized curvature tensor} if in addition it satisfies the compatibility relationship of Equation
(\ref{eqn-1.d}); let $\mathfrak{W}=\mathfrak{W}(V,\langle\cdot,\cdot\rangle)$ be the space of all such tensors. If
$A\in\mathfrak{W}$, then the structure
$(V,\langle\cdot,\cdot\rangle,\nabla,A)$ is said to be a {\it Weyl curvature model}. Such
a structure is said to be {\it geometrically realizable by a Weyl manifold} if there exists
a Weyl manifold $(M,g,\nabla)$, a point $P\in M$, and an isomorphism $\Xi$ from
$V$ to $T_PM$ so that
$\Xi^*g_P=\langle\cdot,\cdot\rangle$ and so that $\Xi^*R_P=A$. The following result
\cite{GNS10} extends Theorem
\ref{thm-1.1} to the Weyl setting; it shows that Equations  (\ref{eqn-1.b}) and (\ref{eqn-1.d}) generate the universal
symmetries of the curvature tensor in Weyl geometry:

\begin{theorem}\label{thm-1.2}
Every Weyl curvature model is geometrically realizable by a Weyl manifold.
\end{theorem}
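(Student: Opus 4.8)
The plan is to realize $A$ on a small convex ball $M$ about the origin in $V$, with $P=0$ and $\Xi=\id$; what must be produced is a Riemannian metric $g$ on $M$ together with a torsion free connection $\nabla$ satisfying $\nabla g=-2\phi\otimes g$ for some smooth $1$-form $\phi$, such that $g_0=\langle\cdot,\cdot\rangle$ and $R_0=A$. The starting point is the familiar parametrization of Weyl structures: if $\nabla^g$ is the Levi--Civita connection of $g$ and $\phi^\sharp$ is the $g$-dual of $\phi$, then the torsion free connection
$$\nabla_xy:=\nabla^g_xy+\phi(x)y+\phi(y)x-g(x,y)\phi^\sharp$$
satisfies $\nabla g=-2\phi\otimes g$, and conversely every Weyl structure with underlying metric $g$ arises in this way. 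Writing $\nabla=\nabla^g+S$ and inserting this into the standard formula expressing the curvature of $\nabla^g+S$ through $\mathcal{R}^g$, $\nabla^gS$, and the quadratic term in $S$, one finds that at a point $P$ where $\phi_P=0$, after lowering an index with $g$,
$$R_P(x,y,z,w)=R^g_P(x,y,z,w)+\mathcal{E}(h)(x,y,z,w),\qquad h:=(\nabla^g\phi)_P,$$
$$\mathcal{E}(h)(x,y,z,w):=\{h(x,y)-h(y,x)\}g(z,w)+h(x,z)g(y,w)-h(y,z)g(x,w)-g(y,z)h(x,w)+g(x,z)h(y,w)\,.$$
Establishing this curvature identity is the first of the two main computations.

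The second is the following algebraic observation. Let $\mathfrak{R}\subset\mathfrak{A}$ be the subspace of generalized curvature tensors which are in addition antisymmetric in their last two arguments --- the Riemann algebraic curvature tensors --- so that $\mathfrak{R}\subset\mathfrak{W}$. One checks that $\mathcal{E}(h)\in\mathfrak{A}$ for every $h$ (this is automatic, since $\mathcal{E}(h)$ is a difference of curvature tensors of torsion free connections), that $\mathcal{E}(h)\in\mathfrak{R}$ whenever $h$ is symmetric (in that case $\mathcal{E}(h)$ is the Kulkarni--Nomizu product of $h$ with $g$), and --- the key point --- that $\rho_{14}(\mathcal{E}(\psi))=-m\,\psi$ whenever $h=\psi$ is a $2$-form. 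Now let $A\in\mathfrak{W}$ be given, write $\rho_{14}(A)=\rho_{14}(A)^{\mathrm{sym}}+\psi_A$ with $\psi_A$ its skew part, and set $\psi:=-\tfrac1m\psi_A\in\Lambda^2V^*$. Then $\rho_{14}\bigl(A-\mathcal{E}(\psi)\bigr)=\rho_{14}(A)+m\psi=\rho_{14}(A)^{\mathrm{sym}}$ is symmetric, so by Equation \eqref{eqn-1.d} the tensor $B:=A-\mathcal{E}(\psi)$ is antisymmetric in its last two arguments, i.e. $B\in\mathfrak{R}$.

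It remains to realize $B$ and $\psi$ separately and then combine them. For $B$ I would use the classical fact that every Riemann algebraic curvature tensor is the curvature at a point of a Riemannian metric germ: fixing an orthonormal basis $\{e_i\}$ for $(V,\langle\cdot,\cdot\rangle)$ and identifying $M$ with a ball in $\mathbb{R}^m$, the quadratic perturbation $g_{ij}(x):=\langle e_i,e_j\rangle+c\,B(e_i,e_k,e_j,e_l)x^kx^l$ (summed over $k,l$, for the appropriate universal constant $c$) is positive definite near $0$, has $g_0=\langle\cdot,\cdot\rangle$, and has $R^g_0=B$. Its Christoffel symbols vanish at $0$, so the linear $1$-form $\phi:=\psi_{ij}x^i\,dx^j$, where $\psi_{ij}=\psi(e_i,e_j)$, satisfies $\phi_0=0$ and $(\nabla^g\phi)_0=\psi$. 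Taking $\nabla$ to be the Weyl connection attached to $(g,\phi)$ by the formula above makes $(M,g,\nabla)$ a Weyl manifold with $g_0=\langle\cdot,\cdot\rangle$, and the curvature identity gives $R_0=R^g_0+\mathcal{E}((\nabla^g\phi)_0)=B+\mathcal{E}(\psi)=A$, as required. I expect the main effort to lie in the two flagged computations --- the Weyl curvature identity for $\mathcal{E}$ and the trace formula $\rho_{14}(\mathcal{E}(\psi))=-m\psi$ --- and, to a lesser extent, in verifying that the quadratic perturbation above indeed realizes $B$ (which uses that $B$ enjoys all the Riemann symmetries); the remaining steps are routine.
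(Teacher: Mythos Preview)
The paper does not actually prove Theorem~\ref{thm-1.2}; it is quoted from \cite{GNS10}, and the only remark the authors make about its proof is in Section~1.8, where they say it ``rests upon work of Higa~\cite{H94}''. So there is no in-paper proof to compare against directly.

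That said, your proposal is correct and is essentially the argument one would expect the cited reference to give. Higa's result is precisely the decomposition $\mathfrak{W}=\mathfrak{R}\oplus\Lambda^2$ as $\mathcal{O}(V)$-modules, which is what you are using when you split $A=B+\mathcal{E}(\psi)$ with $B\in\mathfrak{R}$ and $\psi\in\Lambda^2$. Your identification of the $\Lambda^2$ summand with the skew part of $\rho_{14}$ via $\mathcal{E}$, the verification that $\mathcal{E}(\psi)\in\mathfrak{W}$ with $\rho_{14}(\mathcal{E}(\psi))=-m\psi$, and the reduction of $B\in\mathfrak{R}$ to Theorem~\ref{thm-1.3} are all sound. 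The one place to be slightly careful is your definition of $\mathfrak{R}$ via antisymmetry in the last two slots rather than via Equation~(\ref{eqn-1.e}); these are equivalent in the presence of the symmetries~(\ref{eqn-1.b}), but you should say so explicitly, since your quadratic-metric realization of $B$ uses the pair-interchange symmetry $B(x,y,z,w)=B(z,w,x,y)$ and not merely the last-slot antisymmetry.
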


\subsection{Riemannian geometry} If $g$ is a Riemannian metric on $M$, let $\mathcal{R}^g$ be the curvature operator and
let $R^g$ be the curvature tensor which are defined by the Levi-Civita connection $\nabla^g$. There is an additional
curvature symmetry in this case:
\begin{equation}\label{eqn-1.e}
R^g(x,y,z,w)=R^g(z,w,x,y)\,.
\end{equation}
One says that $A\in\mathfrak{A}$ is a {\it Riemannian curvature
tensor} if in addition it satisfies the symmetry of Equation (\ref{eqn-1.e}).  Let
$\mathfrak{R}=\mathfrak{R}(V,\langle\cdot,\cdot\rangle)$ be the space of all such tensors. 
If $R\in\mathfrak{R}$, then Equations (\ref{eqn-1.b}) and (\ref{eqn-1.e}) imply the additional symmetry:
$$R(x,y,z,w)=-R(x,y,w,z)\,.$$ 
It now follows that:
$$
\mathfrak{R}\subset\mathfrak{W}\subset\mathfrak{A}\,.$$
Thus Weyl geometry is in a certain sense intermediate between affine and Riemannian geometry. If
$A\in\mathfrak{R}$, then the structure $(V,\langle\cdot,\cdot\rangle,A)$ is said to be a {\it Riemannian
curvature model}. Such a structure is said to be {\it geometrically realizable by a Riemannian manifold} if
there exists a Riemannian manifold $(M,g)$, a point
$P\in M$, and an isomorphism $\Xi:V\rightarrow T_PM$ so that $\Xi^*g_P=\langle\cdot,\cdot\rangle$ and so that
$\Xi^*R_P^g=A$. The following result is well
known - see, for example, the discussion in
\cite{BNGS06}. It shows that the universal symmetries of
the curvature tensor associated to the Levi-Civita connection are given by Equations
(\ref{eqn-1.a}) and (\ref{eqn-1.e}):
\begin{theorem}\label{thm-1.3}
Every Riemannian curvature model can be realized geometrically by a Riemannian manifold.
\end{theorem}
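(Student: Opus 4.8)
The plan is to realize the model directly on $V$ itself, by perturbing the flat metric with the quadratic term that appears in the Taylor expansion of a Riemannian metric about a point in geodesic normal coordinates. Fix an orthonormal basis $\{e_i\}$ of $(V,\langle\cdot,\cdot\rangle)$ and set $A_{ijkl}:=\langle A(e_i,e_j)e_k,e_l\rangle$; by hypothesis these components enjoy the usual Riemannian curvature symmetries, namely antisymmetry in $\{i,j\}$, antisymmetry in $\{k,l\}$, the pair interchange $A_{ijkl}=A_{klij}$, and the first Bianchi identity. Using this basis to identify $V$ with $\mathbb{R}^m$, we obtain global coordinates $(x^1,\dots,x^m)$, and we would define
$$g_{ij}(x):=\delta_{ij}+\tfrac13 A_{ikjl}\,x^kx^l\,.$$
Since $x^kx^l$ is symmetric in $k,l$, only the $\{k,l\}$-symmetrization of $A_{ikjl}$ enters this formula, and the pair symmetry forces that symmetrization to be symmetric in $\{i,j\}$ as well, so $g$ is a well-defined symmetric $2$-tensor; because $g_{ij}(0)=\delta_{ij}$ and positive definiteness is an open condition, $g$ restricts to a Riemannian metric on some ball $U$ about the origin.

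We would then take $(M,g):=(U,g)$, $P:=0$, and $\Xi:V\rightarrow T_PM$ the linear isomorphism sending $e_i$ to $\partial_{x^i}|_0$; the relation $\Xi^*g_P=\langle\cdot,\cdot\rangle$ is immediate from $g_{ij}(0)=\delta_{ij}$. The substance of the theorem thus becomes the identity $R^g(e_i,e_j,e_k,e_l)_0=A_{ijkl}$. To verify it, we first note that $\partial_kg_{ij}(0)=0$, so every Christoffel symbol of $g$ vanishes at the origin, and therefore the curvature tensor there is given by the purely second-order expression
$$R^g(e_i,e_j,e_k,e_l)_0=\tfrac12\bigl(\partial_i\partial_kg_{jl}+\partial_j\partial_lg_{ik}-\partial_i\partial_lg_{jk}-\partial_j\partial_kg_{il}\bigr)(0)$$
with no $\Gamma\cdot\Gamma$ correction. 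Since $\partial_a\partial_bg_{ij}(0)=\tfrac13(A_{iajb}+A_{ibja})$, substituting produces eight terms, which we would then reorganize into $A_{ijkl}$ by repeated application of the two antisymmetries, the pair symmetry, and the first Bianchi identity.

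That reorganization is the one genuinely computational step, and it is the place where every symmetry in the definition of $\mathfrak{R}$ gets used: the antisymmetry and pair symmetry are what make the formula for $g$ deliver an honest symmetric metric, while the pair symmetry and Bianchi identity are exactly what is needed to collapse the eight terms to $A$ rather than to a proper symmetrization of it, and the constant $\tfrac13$ is the one forced by the same calculation (equivalently, by the classical normal-coordinate expansion of a metric). I expect no further difficulty: the construction is completely explicit and real-analytic, and purely local about $P$, so no global obstruction can intervene.
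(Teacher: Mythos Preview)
Your argument is correct and is in fact the classical construction for this well-known result. The paper itself does not supply a proof of Theorem~\ref{thm-1.3}; it merely cites it as known, referring the reader to \cite{BNGS06}. Your ansatz $g_{ij}(x)=\delta_{ij}+\tfrac13 A_{ikjl}x^kx^l$ does produce $R^g_{ijkl}(0)=A_{ijkl}$ under the paper's curvature convention $R(x,y,z,w)=g(\mathcal{R}(x,y)z,w)$: the four ``direct'' terms in the second-order curvature formula each contribute $A_{ijkl}$ after the two antisymmetries, and the remaining four terms combine via the identity $A_{ikjl}-A_{iljk}=A_{ijkl}$ (an immediate consequence of Bianchi plus pair symmetry) to give the missing $2A_{ijkl}$, so the factor $\tfrac13$ is exactly right. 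No further issues arise.
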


\subsection{Almost Hermitian geometry} Let $m=2\bar m$ be even henceforth. A triple $(M,g,J)$ is said to be an {\it
almost Hermitian manifold} if
$g$ is a Riemannian metric on $M$ and if
$J$ is an endomorphism of $TM$ satisfying $J^*g=g$ and $J^2=-\operatorname{Id}$. One says an endomorphism $J$
of $V$ {\it gives $V$ a complex structure} if $J^2=-\operatorname{Id}$. A quadruple
$(V,\langle\cdot,\cdot\rangle,J,A)$ is said to be an {\it almost Hermitian curvature model} if $J$ gives $V$ a
complex structure, if
$J^*\langle\cdot,\cdot\rangle=\langle\cdot,\cdot\rangle$, and if $A\in\mathfrak{R}$. One says such a structure
can be {\it geometrically realized} by an almost Hermitian manifold if there exists an almost Hermitian
manifold $(M,g,J)$, a point $P\in M$, and an isomorphism $\Xi:V\rightarrow V$ so that
$\Xi^*g_P=\langle\cdot,\cdot\rangle$, $\Xi^*J_P=J$, and $ \Xi^*R_P^g=A$. There are no
additional curvature restrictions and one has the following result \cite{GBKNW08}:
\begin{theorem}\label{thm-1.4}
Every almost Hermitian curvature model can be realized geometrically by an almost Hermitian manifold.
\end{theorem}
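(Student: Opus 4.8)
The plan is to combine Theorem~\ref{thm-1.3} with the fact that an almost Hermitian curvature model records no curvature information beyond that of its underlying Riemannian curvature model. Thus it suffices to realize $(V,\langle\cdot,\cdot\rangle,A)$ by a Riemannian manifold and then to equip the realizing manifold, near the base point, with a $g$-compatible almost complex structure whose value at that point is $J$.

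First I would apply Theorem~\ref{thm-1.3} to obtain a Riemannian manifold, a point, and an isomorphism realizing $(V,\langle\cdot,\cdot\rangle,A)$. Passing to a coordinate chart centered at the base point and composing with the given isomorphism (a standard normalization), I may assume that $M$ is an open neighborhood of $0$ in $V$, that $P=0$, that the relevant isomorphism is $\operatorname{Id}_V$, that $g_0=\langle\cdot,\cdot\rangle$, and that $R^g_0=A$. In particular the coordinate vector fields trivialize $TM$, so $J$ extends to the constant endomorphism field $J_0$ of $TM$; this $J_0$ satisfies $J_0^2=-\operatorname{Id}$ and is $\langle\cdot,\cdot\rangle$-orthogonal, but it need not be $g$-orthogonal away from $0$.

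To correct this I would normalize $J_0$ by a fibrewise polar decomposition. Let $\omega_0$ be the constant, hence everywhere nondegenerate, $2$-form $\omega_0(x,y):=\langle J_0x,y\rangle$, let $C\in\operatorname{End}(TM)$ be defined by $g(Cx,y)=\omega_0(x,y)$, and set $\tilde J:=C\,(-C^2)^{-1/2}$, where $(-C^2)^{1/2}$ is the smooth positive self-adjoint square root of the $g$-positive operator field $-C^2=C^*C$. Since $C$ is $g$-skew-adjoint and invertible and commutes with $(-C^2)^{\pm1/2}$, one checks that $\tilde J^2=-\operatorname{Id}$ and $\tilde J^*=-\tilde J$, so $\tilde J^*g=g$; and at $0$, where $g_0=\langle\cdot,\cdot\rangle$, one has $C_0=J_0$ and hence $\tilde J_0=J$. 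After shrinking $M$ so that $-C^2$ stays positive definite, $(M,g,\tilde J)$ is an almost Hermitian manifold and $\operatorname{Id}_V$ realizes $(V,\langle\cdot,\cdot\rangle,J,A)$, which proves the theorem.

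I do not anticipate a genuine obstacle here: the curvature has already been installed by Theorem~\ref{thm-1.3} and is unaffected by replacing $J_0$ with $\tilde J$, no integrability hypothesis on $\tilde J$ is imposed, and the only thing to monitor is that the polar normalization is carried out on a neighborhood small enough for $-C^2$ to remain positive and its square root to stay smooth. An alternative, more conceptual implementation of the fitting step is to observe that over the contractible manifold $M$ the bundle whose fibre at $p$ is the connected homogeneous space of $g_p$-orthogonal complex structures on $T_pM$ inducing a fixed orientation is trivial, hence admits a section with any prescribed value over $P$; one applies this after orienting $M$ so that $J$ lies in the selected component.
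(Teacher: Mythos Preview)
Your argument is correct. The reduction to Theorem~\ref{thm-1.3} together with the polar-decomposition normalization of the constant complex structure is a clean, elementary route: the key points---that $C$ is $g$-skew and commutes with $(-C^2)^{-1/2}$, so that $\tilde J^2=-\operatorname{Id}$ and $\tilde J$ is $g$-orthogonal, and that $C_0=J$ forces $\tilde J_0=J$---are all in order.

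As to comparison with the paper: there is no proof of Theorem~\ref{thm-1.4} in the text; the result is simply quoted from \cite{GBKNW08}. That reference proves more than you do, namely (as indicated in the Remark following Theorem~\ref{thm-1.6}) that the realizing almost Hermitian manifold can be chosen with constant scalar curvature. That stronger conclusion is obtained by building the metric $g$ and the almost complex structure simultaneously from the outset (rather than first realizing the Riemannian model and then grafting on $\tilde J$) and invoking the Cauchy--Kovalevskaya Theorem to force $\tau$ to be constant. Your approach is shorter and entirely adequate for the statement as written, but it does not yield the constant-scalar-curvature refinement, since you have no control over the metric produced by Theorem~\ref{thm-1.3}.
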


\subsection{Hermitian geometry} If $(M,g,J)$ is an almost Hermitian manifold, then the almost complex structure $J$
is said to be {\it integrable} and $(M,g,J)$ is called a {\it Hermitian manifold} if there are local charts
$(x_1,...,x_{\bar m},y_1,...,y_{\bar m})$ so that

\begin{equation}\label{eqn-1.f}
J\partial_{x_i}=\partial_{y_i}\quad\text{and}\quad J\partial_{y_i}=-\partial_{x_i}\,.
\end{equation}
Equivalently, via the
Newlander--Nirenberg Theorem \cite{NN57}, this means that the {\it Nijenhuis tensor}
$$N_{J}(x,y):=[x,y]+J[Jx,y]
 +J[x,Jy]-[Jx,Jy]$$
vanishes. There is an extra
curvature restriction in this setting discovered by Gray
\cite{gray}:
\begin{eqnarray}
0&=&R^g(x,y,z,w)+R^g(Jx,Jy,Jz,Jw)-R^g(Jx,Jy,z,w)\nonumber\\
&-&R^g(x,y,Jz,Jw)-R^g(Jx,y,Jz,w)-R^g(x,Jy,z,Jw)\label{eqn-1.g}\\
&-&R^g(Jx,y,z,Jw)-R^g(x,Jy,Jz,w)\,.\nonumber
\end{eqnarray}
One says that an almost Hermitian curvature model $(V,\langle\cdot,\cdot\rangle,J,A)$ is a {\it Hermitian curvature
model} if $A$ also satisfies Equation (\ref{eqn-1.g}). One has \cite{BGKN09}:
\begin{theorem}\label{thm-1.5}
Every Hermitian curvature model is geometrically realizable by a Hermitian manifold.
\end{theorem}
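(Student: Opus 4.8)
The plan is to realize the model locally, on a small ball about the origin in $\mathbb{R}^{2\bar m}=\mathbb{C}^{\bar m}$ equipped with the standard constant complex structure, by a quadratic perturbation of the flat Hermitian metric; the essential content then becomes a piece of $\mathrm{U}(\bar m)$-equivariant linear algebra. Using $J^*\langle\cdot,\cdot\rangle=\langle\cdot,\cdot\rangle$, first choose a basis $\{e_i\}$ of $V$ in which $\langle\cdot,\cdot\rangle$ is the standard inner product and $J$ is the standard complex structure $J_0$. Identify $V=T_0\mathbb{R}^{2\bar m}$ with $P=0$ and $\Xi=\operatorname{Id}$, and let $J$ denote the constant extension of $J_0$. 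Its Nijenhuis tensor vanishes identically (indeed \eqref{eqn-1.f} holds in the standard coordinates), so for \emph{any} Riemannian metric $g$ with $J^*g=g$ the triple $(M,g,J)$ is Hermitian; in particular $R^g$ then automatically satisfies \eqref{eqn-1.g}, and integrability never has to be revisited. We seek $g$ of the form $g_{ij}(x)=\langle e_i,e_j\rangle+h_{ij}(x)$ with $h_{ij}$ a quadratic polynomial, symmetric in $i,j$, and $J_0$-invariant in the sense that $\sum_{a,b}h_{ab}(x)(J_0)^a{}_i(J_0)^b{}_j=h_{ij}(x)$ for all $x$; on a small enough ball such a $g$ is positive definite and $J$-Hermitian.

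Because $h$ vanishes to second order at $0$, the Christoffel symbols of $g$ vanish at $0$, so the curvature at the origin is a linear expression in the $2$-jet of $h$,
$$R^g(e_i,e_j,e_k,e_\ell)(0)=\tfrac12\bigl(\partial_i\partial_k h_{j\ell}+\partial_j\partial_\ell h_{ik}-\partial_i\partial_\ell h_{jk}-\partial_j\partial_k h_{i\ell}\bigr)(0),$$
up to the usual sign and normalization conventions. Realizing $A$ thus reduces to the following linear problem: find a $J_0$-invariant symmetric array $c_{ij,k\ell}$ (to serve as the Hessian $\partial_k\partial_\ell h_{ij}(0)$) whose image under the displayed curvature map is $A$. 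If one drops the $J_0$-invariance requirement this map is already surjective onto $\mathfrak{R}$ — this is precisely the classical construction underlying Theorem \ref{thm-1.3} — so its image when restricted to $J_0$-invariant Hessians is a subspace $W\subseteq\mathfrak{R}$. Since the perturbed manifolds are Hermitian, $W$ is contained in the space $\mathfrak{H}=\mathfrak{H}(V,\langle\cdot,\cdot\rangle,J)$ of Hermitian algebraic curvature tensors, namely those in $\mathfrak{R}$ additionally obeying \eqref{eqn-1.g}.

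The crux, and the step I expect to be the main obstacle, is the reverse inclusion $\mathfrak{H}\subseteq W$: that Gray's identity \eqref{eqn-1.g} is not merely a necessary but a \emph{sufficient} condition to be hit by a $J_0$-invariant quadratic perturbation. The curvature map above is $\mathrm{U}(\bar m)$-equivariant, so it suffices to verify that every $\mathrm{U}(\bar m)$-irreducible summand of $\mathfrak{H}$ already occurs among the $J_0$-invariant symmetric Hessians and is not annihilated by the map; this is a finite, if somewhat intricate, representation-theoretic computation, conveniently organized by the Tricerri--Vanhecke/Gray decomposition of $\mathfrak{H}$ into its K\"ahler part and the complementary ``non-K\"ahler'' pieces, and it can be cross-checked in low dimensions against familiar models such as the complex space forms. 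Equivalently, and more concretely, one can exhibit an explicit right inverse $A\mapsto c(A)$ to the curvature map and check directly that $c(A)$ is $J_0$-invariant precisely when $A$ satisfies \eqref{eqn-1.g}.

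Once $\mathfrak{H}=W$ has been established, take $h$ to be the quadratic polynomial with $\partial_k\partial_\ell h_{ij}(0)=c(A)_{ij,k\ell}$, with $h$ and its first derivatives vanishing at $0$; shrinking the ball if needed, $g=\langle\cdot,\cdot\rangle+h$ is a positive definite $J$-Hermitian metric, $(M,g,J)$ is a Hermitian manifold, and by construction $\Xi^*g_0=\langle\cdot,\cdot\rangle$, $\Xi^*J_0=J$, and $\Xi^*R^g_0=A$. This realizes the given Hermitian curvature model and completes the proof.
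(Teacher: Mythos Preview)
The paper does not prove Theorem~\ref{thm-1.5}; it is quoted from \cite{BGKN09}, with the remark in Section~1.8 that the argument there rests on the Tricerri--Vanhecke decomposition. So there is no in-paper proof to compare against line by line, only the indication that the cited proof is organized by the $\mathcal{U}$-module decomposition of $\mathfrak{R}$.

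Your overall architecture matches that indication: perturb the flat Hermitian metric by a $J_0$-invariant quadratic $h$, linearize the curvature at the origin, and reduce to showing that the resulting $\mathcal{U}$-equivariant map from $J_0$-invariant Hessians onto $\mathfrak{R}$ has image exactly $\mathfrak{H}$. That is the right framing.

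The gap is that you do not actually carry out the crux step $\mathfrak{H}\subseteq W$. You offer two alternatives --- check each Tricerri--Vanhecke summand is hit, or exhibit a right inverse $A\mapsto c(A)$ and verify it is $J_0$-invariant exactly when $A$ satisfies \eqref{eqn-1.g} --- but execute neither. The second alternative is phrased as though it were routine, and it is not: the standard right inverse to the linearized curvature map (e.g.\ $c_{ij,k\ell}=-\tfrac13(A_{ikj\ell}+A_{i\ell jk})$ up to conventions) has no obvious reason to be $J_0$-invariant when $A$ merely satisfies Gray's identity, and one must either correct it by something in the kernel or abandon it and work summand by summand. In \cite{BGKN09}, and analogously in Section~\ref{sect-4} of the present paper for the affine K\"ahler case, this is done by constructing explicit metrics whose curvature has nonzero projection onto each irreducible piece. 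Your proposal is a sound plan, but the ``finite, if somewhat intricate'' computation you defer \emph{is} the proof; without it, what you have is the setup.
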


\subsection{Riemannian K\"ahler geometry} The {\it K\"ahler} form of an almost Hermitian manifold
$(M,g,J)$ is defined by setting $\Omega(x,y):=g(x,Jy)$. One says that an almost Hermitian manifold is a {\it K\"ahler}
manifold if $J$ is integrable and $d\Omega=0$ or, equivalently, if
$\nabla ( J)=0$. This implies a restriction on
curvature:
\begin{equation}\label{eqn-1.h}
R^g(x,y,z,w)=R^g(x,y,Jz,Jw)\,.
\end{equation}
An almost Hermitian curvature model $(V,\langle\cdot,\cdot\rangle,J,A)$ is said to be a {\it K\"ahler
curvature model} if Equation (\ref{eqn-1.h}) is satisfied; as necessarily Equation (\ref{eqn-1.g}) is
satisfied in this setting, any K\"ahler curvature model is a Hermitian curvature model.  One
has
\cite{BGM09} that:
\begin{theorem}\label{thm-1.6}
Any K\"ahler curvature model is geometrically
realizable by a K\"ahler manifold.
\end{theorem}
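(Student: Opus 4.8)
The plan is to produce the realization by writing down a K\"ahler metric through a K\"ahler potential on a small ball about the origin in $\mathbb{C}^{\bar m}$; in this way the construction collapses to prescribing the $4$-jet at the origin of a single real-valued function. First I would normalize the data: choose a unitary frame to identify $(V,\langle\cdot,\cdot\rangle,J)$ with $\mathbb{C}^{\bar m}=\mathbb{R}^{2\bar m}$, where $J$ is multiplication by $\sqrt{-1}$ and $\langle\cdot,\cdot\rangle$ is the real part of the standard Hermitian inner product, and let $(z^1,\dots,z^{\bar m})$ be the resulting holomorphic coordinates. Extending $A$ to be complex multilinear and using that $A\in\mathfrak{R}$ satisfies the K\"ahler identity \eqref{eqn-1.h}, one checks that $A$ is determined by the components $A_{i\bar jk\bar l}:=A(\partial_{z^i},\partial_{\bar z^j},\partial_{z^k},\partial_{\bar z^l})$, which obey exactly the relations $A_{i\bar jk\bar l}=A_{k\bar ji\bar l}=A_{i\bar lk\bar j}$ and $\overline{A_{i\bar jk\bar l}}=A_{j\bar il\bar k}$. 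These are precisely the symmetries of the mixed fourth partials $\partial_{z^i}\partial_{z^k}\partial_{\bar z^j}\partial_{\bar z^l}F$ at a point of a smooth real function $F$, and conversely every array with these symmetries occurs in this way; so I would set
$$F(z,\bar z):=\sum_{i}z^i\bar z^i+c\sum_{i,j,k,l}A_{i\bar jk\bar l}\,z^i\bar z^jz^k\bar z^l$$
for a universal real constant $c$ to be fixed below, the Hermitian symmetry of $A$ guaranteeing that $F$ is real-valued.

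Next I would build the manifold and verify that it is K\"ahler. Take $M$ to be a ball about $0\in\mathbb{C}^{\bar m}$ and set $g_{i\bar j}:=\partial_{z^i}\partial_{\bar z^j}F$. Since $g_{i\bar j}(0)=\delta_{ij}$ and the remaining contribution is $O(|z|^2)$, on a possibly smaller ball $g$ is a positive definite Hermitian metric whose underlying Riemannian metric $\tilde g$ has $\tilde g_0=\langle\cdot,\cdot\rangle$ under the identification above, and $J_0$ corresponds to $J$. The associated K\"ahler form is $\Omega=\sqrt{-1}\,\partial\bar\partial F$, hence closed, so $(M,\tilde g,J)$ is a K\"ahler manifold.

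Finally I would compute the curvature at the origin. Writing $F=\sum_iz^i\bar z^i+P$ with $P$ the homogeneous quartic term, we have $g_{i\bar j}=\delta_{ij}+\partial_{z^i}\partial_{\bar z^j}P$. Feeding this into the standard formula $R_{i\bar jk\bar l}=-\partial_{z^k}\partial_{\bar z^l}g_{i\bar j}+g^{p\bar q}(\partial_{z^k}g_{i\bar q})(\partial_{\bar z^l}g_{p\bar j})$ for the curvature of a K\"ahler metric, the correction term vanishes at $0$ because $\partial_{z^k}g_{i\bar q}$ is a third derivative of $P$ and so is $O(|z|)$; hence $R_{i\bar jk\bar l}(0)=-\partial_{z^i}\partial_{z^k}\partial_{\bar z^j}\partial_{\bar z^l}P(0)$, which is a fixed nonzero multiple of $c\,A_{i\bar jk\bar l}$. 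Choosing $c$ so that this multiple equals one, the natural isomorphism $\Xi\colon V\to T_0M$ then satisfies $\Xi^*\tilde g_0=\langle\cdot,\cdot\rangle$, $\Xi^*J_0=J$, and $\Xi^*R^{\tilde g}_0=A$, which is the assertion.

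The point at which care is required is the reduction in the first step: one must check that the algebraic symmetries of a K\"ahler generalized curvature tensor correspond, with neither deficiency nor surplus, to the symmetries of $\partial^2\bar\partial^2F$ for a real $F$ --- equivalently, that the linear map from real quartic forms in $(z,\bar z)$ to the space of K\"ahler curvature tensors is surjective --- and one must pin down the combinatorial constant $c$ that converts the fourth Taylor coefficient of $F$ into curvature. The remaining differential-geometric ingredients (positivity of $g$ near $0$, closedness of $\Omega$, and vanishing of the quadratic correction term at $0$) are then routine.
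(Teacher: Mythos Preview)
Your argument is correct: the K\"ahler potential ansatz $F=|z|^2+c\sum A_{i\bar jk\bar l}z^i\bar z^jz^k\bar z^l$ does the job, the symmetry bookkeeping in the first step goes through (the first Bianchi identity together with $A(\cdot,\cdot,\partial_k,\partial_l)=0$ yields $A_{i\bar jk\bar l}=A_{k\bar ji\bar l}$, and pair symmetry then gives the $\bar j\leftrightarrow\bar l$ symmetry), and the combinatorial constant works out to $c=-\tfrac14$.

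However, the paper does not supply a proof of Theorem~\ref{thm-1.6}; it is quoted from \cite{BGM09} as background for the main result. The outline in Section~1.8 indicates that the proof in \cite{BGM09} proceeds, like the proof of Theorem~\ref{thm-1.8} in this paper, via curvature decomposition theory---specifically the Tricerri--Vanhecke decomposition of the space of K\"ahler algebraic curvature tensors into irreducible $\mathcal{U}$-modules---and then realizes each irreducible summand separately. Your route bypasses representation theory entirely: because in the Riemannian K\"ahler case the curvature is governed by a single real potential, one can hit any prescribed tensor at a point by a quartic perturbation of $|z|^2$. This is shorter and more elementary, but it is special to the metric K\"ahler setting; the decomposition method, by contrast, is the template that extends to the affine K\"ahler case treated in Theorem~\ref{thm-1.8}, where no potential is available and one must build connections factor by factor.
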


\begin{remark}\rm
In fact, more is true. One can use the Cauchy-Kovalevskaya Theorem (see, for example, the
discussion in Evans \cite{E}) to show that the realizations in Theorems
\ref{thm-1.2}-\ref{thm-1.6} can be chosen to have constant scalar curvature. The arguments work equally in
the pseudo-Riemannian setting. Furthermore, Theorem \ref{thm-1.4}, Theorem \ref{thm-1.5}, and Theorem
\ref{thm-1.6} can be extended to the almost para-Hermitian setting, the para-Hermitian setting, and the
para-K\"ahler setting, respectively, \cite{GBKNW08,BGM09,BGNV09}.
\end{remark}

\subsection{Affine K\"ahler geometry} We now return to the affine setting. One says $(M,J,\nabla)$ is an {\it affine
K\"ahler manifold} if $J$ is an integrable almost complex structure on $M$, if $\nabla$
is a torsion free connection on $M$, and if $\nabla (J)=0$. This then implies the
curvature symmetry
\begin{equation}\label{eqn-1.i}
\mathcal{R}(x,y)J=J\mathcal{R}(x,y)\,.
\end{equation}
Let $J$ give $V$ a complex structure and let $\mathfrak{K}=\mathfrak{K}(V,J)\subset\mathfrak{A}$ be the
subspace of generalized algebraic curvature operators satisfying Equation (\ref{eqn-1.i}). If
$\mathcal{A}\in\mathfrak{K}$, then the triple
$(V,J,\mathcal{A})$ is said to be a {\it K\"ahler affine curvature model}. We say such a structure is {\it
geometrically realizable by an affine K\"ahler manifold} if there exists an affine K\"ahler manifold
$(M,J,\nabla)$, a point $P\in M$, and an isomorphism $\Xi:V\rightarrow T_PM$ so $\Xi^*\mathcal{R}=\mathcal{A}$
and $\Xi^*J_P=J$. The following is the main result of this paper and generalizes the geometric realization
results described above to this setting:
\begin{theorem}\label{thm-1.8}
Every K\"ahler affine curvature model is geometrically realizable by an affine K\"ahler manifold.
\end{theorem}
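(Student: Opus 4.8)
\textit{Proof proposal.} The plan is to realize the model by a very explicit connection on a neighbourhood $M$ of the origin in $\mathbb{C}^{\bar m}$, exploiting the rigidity that results from imposing torsion freeness together with $\nabla J=0$ in holomorphic coordinates. Since any complex structure on a real $2\bar m$-dimensional vector space is linearly equivalent to the standard one, we may assume $V=\mathbb{R}^{2\bar m}$ with $J$ the standard complex structure, take $M\subset\mathbb{C}^{\bar m}$ with complex coordinates $(z^1,\dots,z^{\bar m})$ and $J$ extended as the constant standard complex structure, put $P=0$, and let $\Xi$ be the obvious identification $V\cong T_0M$. Then \eqref{eqn-1.f} holds by construction, so $J$ is integrable; hence it suffices to construct a torsion free connection $\nabla$ on $M$, with Christoffel symbols vanishing at the origin, such that $\nabla J=0$ and $\mathcal{R}_0=\mathcal{A}$. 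We will moreover take the Christoffel symbols to be linear in the real coordinates, so the quadratic terms in the formula for $\mathcal{R}$ do not contribute at the origin.

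Write $\partial_\alpha=\partial_{z^\alpha}$ and $\partial_{\bar\alpha}=\partial_{\bar z^\alpha}$, so $J\partial_\alpha=\sqrt{-1}\,\partial_\alpha$ and $J\partial_{\bar\alpha}=-\sqrt{-1}\,\partial_{\bar\alpha}$, and let $V^{1,0}$ be the $\sqrt{-1}$-eigenspace of $J$ in $TM\otimes\mathbb{C}$. As $J$ is constant, $(\nabla_XJ)\partial_\alpha=\sqrt{-1}\,\nabla_X\partial_\alpha-J\nabla_X\partial_\alpha$, so $\nabla J=0$ forces $\nabla_X\partial_\alpha$ to be of type $(1,0)$, and likewise $\nabla_X\partial_{\bar\alpha}$ to be of type $(0,1)$, for every $X$. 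Since $[\partial_\alpha,\partial_{\bar\beta}]=0$, torsion freeness then forces every ``mixed'' Christoffel symbol to vanish, so that $\nabla$ is determined by a single symmetric tensor $\Gamma_{\alpha\beta}{}^\gamma=\Gamma_{\beta\alpha}{}^\gamma$ (a smooth function of $(z,\bar z)$) together with its complex conjugate $\Gamma_{\bar\alpha\bar\beta}{}^{\bar\gamma}$, all other components being zero; for such a connection $\mathcal{R}(\cdot,\cdot)$ automatically preserves the type splitting, and if $\Gamma$ vanishes at the origin then there $\mathcal{R}(\partial_A,\partial_B)\partial_\gamma=\bigl(\partial_A\Gamma_{B\gamma}{}^\delta-\partial_B\Gamma_{A\gamma}{}^\delta\bigr)\partial_\delta$, with $A,B$ ranging over all (holomorphic and antiholomorphic) indices.

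Now extend $\mathcal{A}$ to be complex multilinear. Since $\mathcal{A}(X,Y)$ commutes with $J$ it preserves the type splitting, so its components with output in $V^{1,0}$ are determined by $H_{\alpha\beta\gamma}{}^\delta:=\mathcal{A}(\partial_\alpha,\partial_\beta)\partial_\gamma$, by $M_{\alpha\bar\beta\gamma}{}^\delta:=\mathcal{A}(\partial_\alpha,\partial_{\bar\beta})\partial_\gamma$, by what antisymmetry in the first two arguments produces from these, and by $\mathcal{A}(\partial_{\bar\alpha},\partial_{\bar\beta})\partial_\gamma$; the remaining components of $\mathcal{A}$ are the complex conjugates of these. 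The first Bianchi identity in \eqref{eqn-1.a}, applied to $(\partial_{\bar\alpha},\partial_{\bar\beta},\partial_\gamma)$, forces $\mathcal{A}(\partial_{\bar\alpha},\partial_{\bar\beta})\partial_\gamma=0$; applied to $(\partial_\alpha,\partial_{\bar\beta},\partial_\gamma)$ it forces $M_{\alpha\bar\beta\gamma}{}^\delta=M_{\gamma\bar\beta\alpha}{}^\delta$; and on $H$ it is just the cyclic symmetry, so that $H$ is an affine curvature tensor on $V^{1,0}$. We then set
\[
\Gamma_{\beta\gamma}{}^\delta(z,\bar z):=\tfrac13\,z^\alpha\bigl(H_{\alpha\beta\gamma}{}^\delta+H_{\alpha\gamma\beta}{}^\delta\bigr)-\bar z^{\mu}\,M_{\beta\bar\mu\gamma}{}^\delta,
\]
declare $\Gamma_{\bar\beta\bar\gamma}{}^{\bar\delta}$ to be its complex conjugate and all other Christoffel symbols to be zero, and let $\nabla$ be the resulting real connection. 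This $\nabla$ is smooth and torsion free; $\Gamma_{\beta\gamma}{}^\delta$ is symmetric in $\beta$ and $\gamma$ because $H_{\alpha\beta\gamma}{}^\delta+H_{\alpha\gamma\beta}{}^\delta$ manifestly is and because $M_{\beta\bar\mu\gamma}{}^\delta=M_{\gamma\bar\mu\beta}{}^\delta$ by the Bianchi relation just recorded; and $\nabla J=0$ by construction. Differentiating at the origin, the $z$-derivatives of $\Gamma$ yield $\mathcal{R}(\partial_\alpha,\partial_\beta)\partial_\gamma=H_{\alpha\beta\gamma}{}^\delta\partial_\delta$ by the standard identity $\tfrac13(H_{\alpha\beta\gamma}{}^\delta+H_{\alpha\gamma\beta}{}^\delta)-\tfrac13(H_{\beta\alpha\gamma}{}^\delta+H_{\beta\gamma\alpha}{}^\delta)=H_{\alpha\beta\gamma}{}^\delta$ (valid since $H$ obeys the Bianchi identity), the $\bar z$-derivatives yield $\mathcal{R}(\partial_\alpha,\partial_{\bar\beta})\partial_\gamma=M_{\alpha\bar\beta\gamma}{}^\delta\partial_\delta$, and $\mathcal{R}(\partial_{\bar\alpha},\partial_{\bar\beta})\partial_\gamma=0$. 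The remaining components of $\mathcal{R}_0$ agree with those of $\mathcal{A}$ by complex conjugation, so $\mathcal{R}_0=\mathcal{A}$, which completes the proof.

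The crux is the rigidity step in the second paragraph: that torsion freeness together with $\nabla J=0$ collapses the Christoffel data to a single symmetric tensor of pure type, so the curvature at the origin is read off from just the holomorphic and antiholomorphic first-order parts of $\Gamma$. What makes the realization then work is that the first Bianchi identity simultaneously annihilates the potentially obstructive $(0,2)$-type component of $\mathcal{A}$ and makes $M$ symmetric in exactly the two indices needed for $\Gamma_{\beta\gamma}{}^\delta$ to be symmetric; granting this, the ``holomorphic'' part $H$ is handled by the classical affine realization argument underlying Theorem \ref{thm-1.1} and the ``mixed'' part $M$ is realized directly.
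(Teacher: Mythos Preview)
Your proof is correct, and it takes a genuinely different route from the paper's argument.

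The paper proceeds indirectly: it invokes the decomposition of $\mathfrak{K}(V,J)$ into (up to) twelve irreducible $\mathcal{U}$--modules $W_1,\dots,W_{12}$, observes that the set $\mathfrak{J}\subset\mathfrak{K}$ of curvature operators realized by the ansatz of Definition~\ref{defn-3.1} is a $\mathcal{U}$--submodule, and then exhibits, one component at a time, explicit choices of $\Theta$ whose curvature hits each $W_i$ nontrivially, forcing $\mathfrak{J}=\mathfrak{K}$. Your argument instead works directly in complex coordinates: you use the elementary rigidity that torsion freeness plus $\nabla J=0$ kills all mixed Christoffel symbols, you read the first Bianchi identity on type components to obtain $\mathcal{A}(\partial_{\bar\alpha},\partial_{\bar\beta})\partial_\gamma=0$ and the symmetry $M_{\alpha\bar\beta\gamma}{}^\delta=M_{\gamma\bar\beta\alpha}{}^\delta$, and then write down a single explicit $\Gamma$, linear in $(z,\bar z)$, whose curvature at the origin reproduces $\mathcal{A}$. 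In effect your holomorphic term realizes the $\mathfrak{K}^-$ piece (the ``$H$--part'') via the classical affine realization trick, and your antiholomorphic term realizes the $\mathfrak{K}^+$ piece (the ``$M$--part'') directly; this is consonant with Lemma~\ref{lem-3.1}(2)--(3), but you never need to split $\mathfrak{K}^\pm$ further.

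What each approach buys: yours is shorter, entirely elementary, and constructive --- it produces an explicit realizing connection from $\mathcal{A}$ by a closed formula, with no appeal to representation theory. The paper's approach, by contrast, yields structural information beyond the bare realization statement: it identifies which irreducible components of $\mathfrak{K}$ are detected by $\rho_{13}$, $\rho_{14}$, $\tau$, $\check\tau_J$, and it manufactures explicit curvature examples living in each $W_i$, which is of independent interest for the curvature--decomposition program that motivates the paper.
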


\subsection{Outline of the paper} Theorems of this type usually use curvature decomposition theory. For example, the
proof of Theorem \ref{thm-1.2} rests upon work of Higa \cite{H94}. The proof of  Theorem
\ref{thm-1.5} and the proof of Theorem
\ref{thm-1.6} rest upon the decomposition of Tricerri and Vanhecke \cite{TV81}. The proof of Theorem \ref{thm-1.8}
will rest upon the decomposition of
$\mathfrak{K}(V,J)$ as a unitary module given in \cite{PN91,N89,N94}. We shall review this decomposition in Section
\ref{sect-2}. In Section \ref{sect-3}, we let $J$ be the standard complex structure on $\mathbb{R}^m$ 
of Equation (\ref{eqn-1.f})
and construct affine K\"ahler connections. The curvature decomposition of Section \ref{sect-2} and
the construction of Section \ref{sect-3} will then be applied in Section
\ref{sect-4} to complete the proof of Theorem
\ref{thm-1.8}.

\section{The decomposition of $\mathfrak{K}(V,J)$ as a unitary module}\label{sect-2}
Let $J$ define a complex structure on $V$. Introduce an auxiliary positive definite inner product
$\langle\cdot,\cdot\rangle$ so $J^*\langle\cdot,\cdot\rangle=\langle\cdot,\cdot\rangle$. The associated unitary group is defined by
$$\mathcal{U}:=\{T\in\operatorname{GL}(V):T^*\langle\cdot,\cdot\rangle=\langle\cdot,\cdot\rangle\quad\text{and}
\quad TJ=JT\}\,.$$
Pullback makes $\mathfrak{K}(V,J)$ into a $\mathcal{U}$ module. In this section, we shall describe the decomposition of
$\mathfrak{K}$ into $\mathcal{U}$ modules given in \cite{PN91,N89}.

\subsection{Ricci tensors}
We may use the inner product to identify $\mathfrak{K}$ with
the set of all $A\in\otimes V^*$ so that:
\begin{eqnarray*}
&&A(x,y,z,w)=-A(y,x,z,w),\\
&&A(x,y,z,w)+A(y,z,x,w)+A(z,x,y,w)=0,\\
&&A(x,y,z,w)=A(x,y,Jz,Jw)\,.
\end{eqnarray*}

We may decompose
\begin{eqnarray*}
&&\mathfrak{K}^\pm:=\left\{\mathcal{A}\in\mathfrak{K}:\mathcal{A}(Jx,Jy)=\pm\mathcal{A}(x,y)\right\}\\
&&\qquad=\{A\in\mathfrak{K}:A(Jx,Jy,Jz,Jw)=\pm A(x,y,z,w)\}\,.
\end{eqnarray*}
 We begin by deriving some basic facts concerning
the Ricci tensors $\rho_{13}$ and $\rho_{14}$ defined in Equation (\ref{eqn-1.c}):

\begin{lemma}\label{lem-2.1}
\
\begin{enumerate}
\item If $A\in\mathfrak{K}$, then $\rho_{13}(x,y)=\rho_{13}(Jx,Jy)$.\item If $A\in\mathfrak{K}^+$, then $J^*\rho_{13}=\rho_{13}$ and $J^*\rho_{14}=\rho_{14}$.\item If $A\in\mathfrak{K}^-$, then $\rho_{13}=0$ and $J^*\rho_{14}=-\rho_{14}$.
\end{enumerate}
\end{lemma}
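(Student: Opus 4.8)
The plan is to verify the three assertions by a sequence of short index computations, exploiting the defining symmetries of $\mathfrak{K}$ together with the fact that $J$ is an isometry of $\langle\cdot,\cdot\rangle$ so that $\{Je_i\}$ is again an orthonormal frame whenever $\{e_i\}$ is. I would fix an orthonormal frame $\{e_i\}$ and write out $\rho_{13}$ and $\rho_{14}$ explicitly as sums.

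First I would prove (1). Starting from $\rho_{13}(Jx,Jy)=A(e_i,Jx,e_i,Jy)$, apply the K\"ahler symmetry $A(x,y,z,w)=A(x,y,Jz,Jw)$ in the first and third slots; more precisely, use it in the form that lets one replace $(e_i,\cdot,e_i,\cdot)$ arguments by $(Je_i,\cdot,Je_i,\cdot)$ after moving $J$'s around via the antisymmetry in the first two slots and the already-known $J$-invariance in the last two. Concretely, $A(e_i,Jx,e_i,Jy)=A(e_i,Jx,J e_i,J(Jy))=-A(Je_i,\dots)$-type manipulations, combined with re-indexing the sum over $\{Je_i\}$, collapse the two extra $J$'s and yield $\rho_{13}(Jx,Jy)=\rho_{13}(x,y)$. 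This uses only the K\"ahler identity in slots $3,4$ and, symmetrically, the analogous statement obtained by feeding the first Bianchi identity into it to move the $J$-pairing to slots $1,2$ — which is exactly where the decomposition into $\mathfrak{K}^\pm$ becomes relevant.

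For (2) and (3), I would use the characterization of $\mathfrak{K}^\pm$ in terms of $A(Je_i,Jx,Je_j,Jy)=\pm A(e_i,x,e_j,y)$. For $A\in\mathfrak{K}^+$: computing $J^*\rho_{13}(x,y)=\rho_{13}(Jx,Jy)$ we already know equals $\rho_{13}(x,y)$ from (1), so that half is automatic; for $J^*\rho_{14}$ one writes $\rho_{14}(Jx,Jy)=A(e_i,Jx,Jy,e_i)$, re-sums over $\{Je_i\}$ to get $A(Je_i,Jx,Jy,Je_i)=A(J e_i,Jx,J(J^{-1}y)\dots)$ and applies the $\mathfrak{K}^+$ relation (after using the last-two-slot K\"ahler symmetry to install the fourth $J$) to land back on $\rho_{14}(x,y)$. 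For $A\in\mathfrak{K}^-$: the key point is that $\rho_{13}(x,y)=A(e_i,x,e_i,y)$, and re-indexing over $\{Je_i\}$ gives $A(Je_i,x,Je_i,y)$; but the $\mathfrak{K}^-$ condition together with the K\"ahler symmetry in slots $3,4$ forces $A(Je_i,x,Je_i,y)=-A(e_i,x,e_i,y)$ once one also pairs up $x$ and $y$ appropriately, and since the sum is frame-independent this says $\rho_{13}=-\rho_{13}$, hence $\rho_{13}=0$; the sign flip for $J^*\rho_{14}$ comes out of the same computation with the minus sign propagating instead of cancelling.

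The main obstacle I anticipate is purely bookkeeping: carefully tracking which slots carry a $J$ after each application of antisymmetry, the first Bianchi identity, and the K\"ahler identity, and making sure that the re-indexing of the trace over the rotated frame $\{Je_i\}$ is invoked with the correct sign. There is no conceptual difficulty — everything follows from the three displayed symmetries and isometry of $J$ — but it is easy to drop or gain a sign, so I would organize the proof around a single lemma-internal identity (the behavior of $A(Je_i,x,Je_i,y)$ and $A(e_i,x,y,e_i)$ under the $\pm$ splitting) and then read off all three conclusions from it.
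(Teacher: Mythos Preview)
Your treatment of (2) and (3) is essentially the paper's: for $A\in\mathfrak{K}^\pm$ one computes directly
$\rho_{14}(Jx,Jy)=A(e_i,Jx,Jy,e_i)=\pm A(Je_i,x,y,Je_i)=\pm\rho_{14}(x,y)$ and similarly
$\rho_{13}(Jx,Jy)=\pm\rho_{13}(x,y)$ by re-summing over the rotated frame $\{Je_i\}$; combining the latter with (1) forces $\rho_{13}=0$ in the minus case. So far so good.

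The gap is in Assertion (1), which is the only step with real content. Your sketch is too vague to be a proof, and one remark is actually a misconception: you say the argument moves ``the $J$-pairing to slots $1,2$ --- which is exactly where the decomposition into $\mathfrak{K}^\pm$ becomes relevant''. But (1) is asserted for \emph{every} $A\in\mathfrak{K}$, so no $\pm$ hypothesis is available; and the Bianchi identity together with the K\"ahler identity in slots $3,4$ does \emph{not} yield an identity of the form $A(Jx,Jy,z,w)=A(x,y,z,w)$ in general --- if it did, $\mathfrak{K}^-$ would be trivial. What the paper actually does is a concrete chain you have not reproduced: write $2\rho_{13}(Jx,Jy)=A(e_i,Jx,e_i,Jy)+A(Je_i,Jx,Je_i,Jy)$, use the K\"ahler identity in slots $3,4$ to strip one $J$ from each summand, apply the first Bianchi identity so that one pair of terms cancels and only $A(Je_i,e_i,Jx,y)$ survives, use the K\"ahler identity again, apply Bianchi a second time, and re-index to reach $2\rho_{13}(x,y)$. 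The two Bianchi applications and the intermediate cancellation are the substance of the argument; your phrase ``$-A(Je_i,\dots)$-type manipulations, combined with re-indexing'' does not capture them, and until you write this chain out the proof of (1) is not complete.
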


\begin{proof} Let $\{e_i\}$ be an orthonormal basis for $V$; $\{Je_i\}$ also is an orthonormal basis for $V$.
Let $A\in\mathfrak{K}$.
We sum over $i$ and compute:
\medbreak\quad
$2\rho_{13}(Jx,Jy)=A(e_i,Jx,e_i,Jy)+A(Je_i,Jx,Je_i,Jy)$
\smallbreak\quad
$\phantom{2\rho_{13}(Jx,Jy)}=-A(e_i,Jx,Je_i,y)+A(Je_i,Jx,e_i,y)$
\smallbreak\quad
$\phantom{2\rho_{13}(Jx,Jy)}=A(Jx,Je_i,e_i,y)+A(Je_i,e_i,Jx,y)-A(Jx,Je_i,e_i,y)$
\smallbreak\quad
$\phantom{2\rho_{13}(Jx,Jy)}=A(Je_i,e_i,Jx,y)=-A(Je_i,e_i,x,Jy)$
\smallbreak\quad
$\phantom{2\rho_{13}(Jx,Jy)}=A(e_i,x,Je_i,Jy)+A(x,Je_i,e_i,Jy)$
\smallbreak\quad
$\phantom{2\rho_{13}(Jx,Jy)}=A(e_i,x,e_i,y)+A(Je_i,x,Je_i,y)$
\smallbreak\quad
$\phantom{2\rho_{13}(Jx,Jy)}=2\rho_{13}(x,y)$
\medbreak\noindent
This proves Assertion (1).
Let $A\in\mathfrak{K}^\pm$. We compute:
\medbreak\quad
$\rho_{14}(Jx,Jy)=A(e_i,Jx,Jy,e_i)=\pm A(Je_i,JJx,JJy,Je_i)$
\smallbreak\qquad
$=\pm A(Je_i,x,y,Je_i)=\pm \rho_{14}(x,y)$,
\smallbreak\quad
$\rho_{13}(Jx,Jy)=A(e_i,Jx,e_i,Jy)=\pm  A(Je_i,JJx,Je_i,JJy)$
\smallbreak\qquad
$=\pm  A(Je_i,x,Je_i,y)=\pm \rho_{13}(x,y)$.
\medbreak\noindent Assertions (2) and (3) follow from these computations and from Assertion (1).
\end{proof}
\subsection{Linear and quadratic representations} The trivial representation appears with multiplicity $2$ in
$\mathfrak{K}$. We define two scalar invariants by setting:
$$\tau:=A(e_i,e_j,e_j,e_i)\quad\text{and}\quad
 \check\tau_J:=A(e_i,Je_j,e_j,e_i)\,.
$$
Decompose $\otimes^2V^*=\Lambda^2\oplus S^2$ as the direct sum of the alternating and the symmetric
$2$-tensors. We can use $J$ and the metric to further decompose
$S^2=S^2_-\oplus\mathbb{R}\oplus S^2_{0,+}$ and
$\Lambda^2=\Lambda^2_-\oplus\mathbb{R}\oplus\Lambda^2_{0,+}$ into irreducible inequivalent $\mathcal{U}$ modules where:
\begin{eqnarray*}
&&S^2_-:=\{\theta\in\otimes^2V^*:J^*\theta=-\theta,\quad\theta(x,y)=\theta(y,x)\},\\
&&S^2_{0,+}:=\{\theta\in\otimes^2V^*:J^*\theta=\theta,\quad\theta(x,y)=\theta(y,x),\quad
\theta\perp\langle\cdot,\cdot\rangle\},\\
&&\Lambda^2_-:=\{\theta\in\otimes^2V^*:J^*\theta=-\theta,\quad\theta(x,y)=-\theta(y,x)\},\\
&&\Lambda^2_{0,+}:=\{\theta\in\otimes^2V^*:J^*\theta=\theta,\quad\theta(x,y)=-\theta(y,x),\quad\theta\perp\Omega\}\,.
\end{eqnarray*}
The following Lemma will follow from computations we shall perform in Section
\ref{sect-3}:

\begin{lemma}\label{lem-2.2}
\ \begin{enumerate}
\item $\tau\oplus\check\tau_J:\mathfrak{K}^+\rightarrow\mathbb{R}\oplus\mathbb{R}\rightarrow0$.
\item $\rho_{14}:\mathfrak{K}^-\rightarrow S_-^2\oplus\Lambda_-^2\rightarrow0$.
\item $\rho_{14}\oplus\rho_{13}:\mathfrak{K}^+\rightarrow
S_{0,+}^2\oplus\Lambda_{0,+}^2\oplus S_{0,+}^2\oplus\Lambda_{0,+}^2\rightarrow0$.
\end{enumerate}\end{lemma}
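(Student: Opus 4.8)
The plan is to deduce the Lemma from explicit examples, namely from the affine K\"ahler connections constructed in Section \ref{sect-3}. On $\mathbb{R}^m$ with coordinates adapted to $J$, one can write down torsion free connections satisfying the K\"ahler condition $\nabla J=0$ whose Christoffel symbols vanish at the origin and are polynomials of low degree; the curvature operator at the origin of such a connection is an element of $\mathfrak{K}$ (and, if the coefficients are chosen with definite parity under $(x,y)\mapsto(Jx,Jy)$, of $\mathfrak{K}^+$ or $\mathfrak{K}^-$), and the contractions $\tau$, $\check\tau_J$, $\rho_{13}$, $\rho_{14}$ of it are explicit linear functions of the coefficients. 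The Lemma is the assertion that, as the coefficients vary, these linear functions already fill out the stated target spaces.

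First I would record that the four contractions are $\mathcal{U}$-equivariant and, by Lemma \ref{lem-2.1}, land in the $J^*$-eigenspaces of $\otimes^2V^*$ indicated above. Hence the image of each of the three maps is a $\mathcal{U}$-submodule of a completely reducible target, and since $\mathbb{R}$, $S^2_-$, $\Lambda^2_-$, $S^2_{0,+}$, $\Lambda^2_{0,+}$ are irreducible and pairwise inequivalent, proving surjectivity reduces to a finite non-degeneracy check. For (1) it is enough to exhibit $\mathcal{A}_1,\mathcal{A}_2\in\mathfrak{K}^+$ with $(\tau(\mathcal{A}_i),\check\tau_J(\mathcal{A}_i))_{i=1,2}$ a basis of $\mathbb{R}^2$. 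For (2), Lemma \ref{lem-2.1}(3) forces $\rho_{13}=0$ on $\mathfrak{K}^-$ and $\rho_{14}$ into $S^2_-\oplus\Lambda^2_-$; as this is a sum of two inequivalent irreducibles, surjectivity is equivalent to the two projections $\pi_{S^2_-}\circ\rho_{14}$ and $\pi_{\Lambda^2_-}\circ\rho_{14}$ being nonzero, i.e. to exhibiting one model in $\mathfrak{K}^-$ whose $\rho_{14}$ has nonzero symmetric part and one whose $\rho_{14}$ has nonzero skew part. For (3), Lemma \ref{lem-2.1}(2) puts $\rho_{13}$ and $\rho_{14}$ into $S^2_+\oplus\Lambda^2_+$ on $\mathfrak{K}^+$; the components along $\mathbb{R}\langle\cdot,\cdot\rangle$ and $\mathbb{R}\Omega$ are precisely the scalars $\tau$ and $\check\tau_J$ of part (1), so modulo those, $(\rho_{14},\rho_{13})$ maps $\mathfrak{K}^+$ into $(S^2_{0,+}\oplus\Lambda^2_{0,+})^{\oplus 2}$.

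The only genuinely delicate point is (3). Since $S^2_{0,+}$ and $\Lambda^2_{0,+}$ each occur in $\mathfrak{K}^+$ with multiplicity two --- parallel to the fact, noted above, that the trivial representation occurs twice --- Schur's lemma identifies the restriction of $(\rho_{14},\rho_{13})$ to the $S^2_{0,+}$-isotypic subspace of $\mathfrak{K}^+$ (respectively the $\Lambda^2_{0,+}$-isotypic subspace) with a $2\times 2$ real matrix, and surjectivity onto the corresponding summand of the target is exactly the invertibility of that matrix. It is \emph{not} sufficient that $\rho_{14}$ and $\rho_{13}$ each be nonzero on $S^2_{0,+}$ and on $\Lambda^2_{0,+}$ separately: I must produce, for each of these two isotypic blocks, a pair of curvature tensors in $\mathfrak{K}^+$ whose $(\rho_{14},\rho_{13})$-data are linearly independent rather than proportional. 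Exhibiting such tensors, together with the witnesses needed for (1) and (2), is exactly what the explicit curvature computations of Section \ref{sect-3} provide; granted those formulas, all three assertions follow at once, the remaining step being only routine index bookkeeping.
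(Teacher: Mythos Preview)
Your plan coincides with the paper's: Lemma~\ref{lem-2.2} is deferred to the explicit curvature computations of Section~\ref{sect-3}, carried out in Section~\ref{sect-4}, where connections with $\Theta(0)=0$ are chosen so that $\mathcal{R}(0)\in\mathfrak{J}^\pm\subset\mathfrak{K}^\pm$ has prescribed Ricci data, and $\mathcal{U}$-equivariance together with irreducibility promotes a single nonzero hit to surjectivity onto each irreducible summand.

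There is, however, one slip in your reduction. You assert that $S^2_{0,+}$ and $\Lambda^2_{0,+}$ are pairwise inequivalent $\mathcal{U}$-modules; they are not. The map $\theta\mapsto\theta(\cdot,J\cdot)$ is a $\mathcal{U}$-isomorphism $S^2_{0,+}\to\Lambda^2_{0,+}$ (this is precisely the observation in Remark~\ref{rmk-2.4} that $W_1\approx W_3\approx W_7\approx W_8$). Consequently the isotypic picture you draw for part~(3) is off: there is a single isotypic block of multiplicity four in the target, not two separate blocks of multiplicity two, and your ``two $2\times2$ matrices'' bookkeeping is not the correct one. (Your claim that this irreducible occurs in $\mathfrak{K}^+$ with multiplicity two is also circular at this point---the decomposition is Theorem~\ref{thm-2.3}, which lies downstream of this Lemma---and in any case the multiplicity is four.)

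This does not wreck the argument, because the paper's witnesses for~(3) are arranged triangularly in a way that sidesteps the issue: first two connections are chosen with $\rho_{13}=0$ and with $\rho_{14}$ hitting $S^2_{0,+}$ respectively $\Lambda^2_{0,+}$; then two further connections are chosen whose $\rho_{13}$ has nonzero $S^2_{0,+}$-component respectively nonzero $\Lambda^2_{0,+}$-component. Irreducibility of each summand plus this triangular structure yields surjectivity onto all four summands with no appeal to inequivalence of $S^2_{0,+}$ and $\Lambda^2_{0,+}$. So your outline survives once you drop the inequivalence claim and instead record the triangular shape of the four witnesses.
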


All short exact sequences of unitary modules split naturally. Let $\rho_{ij}^\pm:=\rho_{ij}$ restricted to
$\mathfrak{K}^\pm$. We use Lemma \ref{lem-2.2} to define 8 mutually orthogonal submodules of
$\mathfrak{K}$ by requiring that:
\begin{eqnarray*}
&&\tau\oplus\check\tau_J:W_5\oplus W_6\mapright{\approx}\mathbb{R}\oplus\mathbb{R},\\
&&\rho_{14}^-:W_2\oplus W_4\mapright{\approx}S_-^2\oplus\Lambda_-^2,\\
&&\rho_{14}^+\oplus\rho_{13}^+:W_1\oplus W_3\oplus W_7\oplus W_8\mapright{\approx}
S_{0,+}^2\oplus\Lambda_{0,+}^2\oplus S_{0,+}^2\oplus\Lambda_{0,+}^2\,.
\end{eqnarray*}
\subsection{Other representations}
There are 4 other submodules of $\mathfrak{K}$ which appear in the decomposition of \cite{PN91,N89}. Let:
\begin{eqnarray*}
&&W_9:=\{A\in\mathfrak{K}^+:A(x,y,z,w)=-A(x,y,w,z)\}\cap\ker(\rho_{14}),\\
&&W_{10}:=\{A\in\mathfrak{K}^+:A(x,y,z,w)=A(x,y,w,z)\}\cap\ker(\rho_{14}),\\
&&W_{11}:=\mathfrak{K}^+\cap W_9^\perp\cap W_{10}^\perp\cap\ker(\rho_{13})\cap\ker(\rho_{14}),\\
&&W_{12}:=\mathfrak{K}^-\cap\ker(\rho_{14})\,.
\end{eqnarray*}
One then has the fundamental result \cite{PN91,N89}:
\begin{theorem}\label{thm-2.3}
\ \begin{enumerate}
\item If $\dim(V)=4$, then $\mathfrak{K}=W_1\oplus...\oplus W_{10}$, $W_{11}=\{0\}$, and $W_{12}=\{0\}$. The spaces
$W_1$, ...,  $W_{10}$ are mutually orthogonal irreducible $\mathcal{U}$ modules.
\item If $\dim(V)\ge6$, then $\mathfrak{K}=W_1\oplus...\oplus W_{12}$. The spaces $W_1$, ...,  $W_{12}$
are mutually orthogonal irreducible $\mathcal{U}$ modules.
\end{enumerate}\end{theorem}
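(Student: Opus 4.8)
The plan is to exploit the compactness of $\mathcal{U}\cong U(\bar m)$. Every finite-dimensional $\mathcal{U}$-module is then completely reducible, and since the inner product on $\mathfrak{K}$ is $\mathcal{U}$-invariant, the orthogonal complement of any submodule is again a submodule. Each of the twelve spaces $W_i$ is carved out by $\mathcal{U}$-invariant data — the $\pm1$ eigenspace splitting $\mathfrak{K}=\mathfrak{K}^+\oplus\mathfrak{K}^-$ under $J$ acting on the first two arguments, kernels of the equivariant contractions $\tau$, $\check\tau_J$, $\rho_{13}$, $\rho_{14}$, symmetry or antisymmetry in the last pair of arguments, and orthogonal complements — so each $W_i$ is automatically a $\mathcal{U}$-submodule. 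I would treat $\mathfrak{K}^+$ and $\mathfrak{K}^-$ separately throughout.

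First I would peel off the eight ``Ricci-carrying'' modules $W_1,\dots,W_8$ using Lemma~\ref{lem-2.2}. The contraction maps there are surjective onto the pairwise inequivalent irreducibles $\mathbb{R}$, $S^2_-$, $\Lambda^2_-$, $S^2_{0,+}$, $\Lambda^2_{0,+}$; because short exact sequences of $\mathcal{U}$-modules split, each $W_i$ with $1\le i\le 8$ maps equivariantly isomorphically onto one of these and is therefore irreducible. The only delicate point is that $S^2_{0,+}$ and $\Lambda^2_{0,+}$ each occur twice — once in the image of $\rho_{14}^+$ and once in the image of $\rho_{13}^+$ — so inside each repeated isotypic component I would use the natural splitting (orthogonal projection followed by Schur's lemma) to separate $W_1$ from $W_7$ and $W_3$ from $W_8$ orthogonally.

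What is left is the joint kernel of all the contractions. On $\mathfrak{K}^-$ this remainder is precisely $W_{12}=\mathfrak{K}^-\cap\ker\rho_{14}$. On $\mathfrak{K}^+$ I would exhibit $W_9$ and $W_{10}$, the pieces that are respectively antisymmetric and symmetric in the last pair of arguments — on each of these the two Ricci contractions coincide up to sign, so $\ker\rho_{14}$ already forces $\ker\rho_{13}$ — and then let $W_{11}$ be the orthogonal remainder; that $W_9$, $W_{10}$, $W_{11}$ actually span the joint kernel in $\mathfrak{K}^+$ is nontrivial, since symmetrization in the last two slots does not respect the first Bianchi identity, and is part of what the dimension count below must confirm. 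To prove that these four ``pure-curvature'' modules are irreducible I would pass to genuine representation theory. Complexifying, $V_{\mathbb{C}}$ splits into the two $\pm i$ eigenspaces of $J$, on which $\mathcal{U}$ acts by the standard and the conjugate representations of $U(\bar m)$; I would compute the $U(\bar m)$-character of $\mathfrak{K}$ (equivalently, apply the first Bianchi symmetrizer to the $J$-compatible part of $\otimes^4V_{\mathbb{C}}^*$ and decompose) and then match the dimension of each of $W_9,\dots,W_{12}$ against the Weyl dimension formula for the irreducible of the expected highest weight, confirming irreducibility by exhibiting a highest-weight vector that generates the full module. I expect this character bookkeeping to be the main obstacle: that each of $W_9,\dots,W_{12}$ is a single irreducible rather than a sum of smaller $\mathcal{U}$-modules is exactly the content of \cite{PN91,N89} and does not follow from the surjectivity assertions of Lemma~\ref{lem-2.2} alone.

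It remains to record orthogonality and to settle the dimension-dependent collapse. Mutual orthogonality is automatic: modules of distinct isomorphism type are orthogonal by Schur's lemma, and within the two repeated isotypic components orthogonality is built into the definitions of the $W_i$. For $\bar m\ge 3$ all twelve expected highest weights are dominant, which gives the twelve-term decomposition of Assertion~(2). For $\bar m=2$, that is $\dim V=4$, the highest weights attached to $W_{11}$ and $W_{12}$ correspond to Young diagrams with more rows than $\bar m$, so the associated irreducibles vanish; checking $W_{11}=\{0\}$ and $W_{12}=\{0\}$ directly from the defining relations is then a short four-dimensional computation, which yields Assertion~(1).
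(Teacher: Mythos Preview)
The paper does not prove Theorem~\ref{thm-2.3}; it is quoted as a known decomposition result from \cite{PN91,N89} and used as input for the rest of the argument. So there is no ``paper's own proof'' to compare your proposal against.

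That said, your outline is a sound sketch of how such a decomposition theorem is established and is in the spirit of the cited references. You correctly separate the construction of $W_1,\dots,W_8$ via the equivariant contractions of Lemma~\ref{lem-2.2} from the harder task of showing that the Ricci-flat remainders $W_9,\dots,W_{12}$ are each a \emph{single} irreducible, and you are right that the latter requires genuine highest-weight or character bookkeeping rather than anything one can read off from Lemma~\ref{lem-2.2}. Your caveat that symmetrization in the last two slots does not preserve the Bianchi identity, so that $W_9\oplus W_{10}\oplus W_{11}$ exhausting the joint kernel in $\mathfrak{K}^+$ needs an honest dimension count, is also well taken. One small point: within the repeated isotypic components $S^2_{0,+}$ and $\Lambda^2_{0,+}$, Schur's lemma alone does not pick out a \emph{canonical} orthogonal pair of copies --- the paper simply \emph{defines} $W_1,W_3,W_7,W_8$ so that the displayed map is an isomorphism and they are mutually orthogonal, which amounts to choosing an orthonormal basis in a two-dimensional $\mathrm{Hom}$-space; your description via ``orthogonal projection followed by Schur'' is fine but should be read as making exactly that choice. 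With that understood, your proposal is a correct plan whose substantive content coincides with what \cite{PN91,N89} carry out.
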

\begin{remark}\label{rmk-2.4}
\rm The dimensions of these modules are given by:
\medbreak\quad
$$\begin{array}{|l|l|l|l|}\noalign{\hrule}
W_1&\bar m^2-1&W_7&\bar m^2-1\gronko\\\noalign{\hrule}
W_2&\bar m(\bar m+1)&W_8&\bar m^2-1\gronko\\\noalign{\hrule}
W_3&\bar m^2-1&W_9&\frac14\bar m^2(\bar m-1)(\bar m+3)\gronko\\\noalign{\hrule}
W_4&\bar m(\bar m-1)&W_{10}&\frac14\bar m^2(\bar m-1)(\bar m+3)\gronko\\\noalign{\hrule}
W_5&1&W_{11}&\frac12(\bar m-1)(\bar m+1)(\bar m-2)(\bar m+2)\gronko\\\noalign{\hrule}
W_6&1&W_{12}&\frac23\bar m^2(\bar m-2)(\bar m+2)\gronko\\\noalign{\hrule}
\end{array}$$
Clearly $W_5\approx W_6$. Let $T\Theta(x,y):=\Theta(x,Jy)$. The map $\Theta\rightarrow T\Theta$ induces an
isomorphism between
$\Lambda_{0,+}$ and $S_{0,+}$. Consequently
$W_1\approx W_3\approx W_7\approx W_8$. Similarly the correspondence $TA(x,y,z,w)=A(x,y,z,Jw)$ defines an
isomorphism
$W_9\approx W_{10}$. However otherwise these modules are distinct on dimensional grounds where, if
$n=4$, we ignore $W_{11}$ and $W_{12}$. We note
$$\dim\mathfrak{K}=\textstyle\frac13\bar m^2(\bar m+1)(5\bar m-2)\,.$$
\end{remark}

\section{Constructing K\"ahler connections}\label{sect-3}

\begin{definition}\label{defn-3.1}
\rm Let $J$ be the canonical complex structure on $\mathbb{R}^{m}=\mathbb{C}^{\bar m}$ of {\rm Equation (\ref{eqn-1.f})}.
Let indices
$i,j,k$ range from $1$ to
$\bar m$. Let $e_i:=\partial_{x_i}$ and $f_i:=\partial_{y_i}=Je_i$. Let $\{u_{ijk},v_{ijk}\}$ be
smooth functions on $\mathbb{R}^{m}$ where $u_{ijk}=u_{jik}$ and $v_{ijk}=v_{jik}$. Let $\Theta:=u+\sqrt{-1}v$. Define a torsion
free connection $\nabla$ by defining:
\begin{eqnarray*}
&&\nabla_{e_i}e_j=-\nabla_{f_i}f_j=u_{ijk}e_k+v_{ijk}f_k,\\
&&\nabla_{f_i}e_j=\nabla_{e_i}f_j=-v_{ijk}e_k+u_{ijk}f_k\,.
\end{eqnarray*}
\end{definition}

The following simple observation will be crucial
to our study.

\begin{lemma}\label{lem-3.1}
\  \begin{enumerate}
\item $\nabla$ is torsion free and $\nabla(J)=0$.
\item If $\Theta$ is holomorphic, then $R\in\mathfrak{K}^-$.
\item If $\Theta$ is anti-holomorphic and if $\Theta(0)=0$, then $R(0)\in\mathfrak{K}^+$.
\end{enumerate}
\end{lemma}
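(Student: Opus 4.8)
The plan is to get (1) by a direct check on the coordinate frame, and then to translate Definition \ref{defn-3.1} into complex coordinates so that (2) and (3) become vanishing statements for Dolbeault components of $\mathcal{R}$. For (1): the coordinate vector fields commute, so torsion-freeness is equivalent to the three identities $\nabla_{e_i}e_j=\nabla_{e_j}e_i$, $\nabla_{f_i}f_j=\nabla_{f_j}f_i$, $\nabla_{e_i}f_j=\nabla_{f_j}e_i$, each of which is immediate from the symmetries $u_{ijk}=u_{jik}$, $v_{ijk}=v_{jik}$; and $\nabla(J)=0$ follows by verifying $\nabla_X(JY)=J\nabla_XY$ on $X,Y\in\{e_i,f_i\}$, e.g. $\nabla_{e_i}(Je_j)=\nabla_{e_i}f_j=-v_{ijk}e_k+u_{ijk}f_k=J(u_{ijk}e_k+v_{ijk}f_k)=J\nabla_{e_i}e_j$, the remaining cases being identical. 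In particular each $\mathcal{R}(x,y)$ commutes with $J$, so $R$ satisfies Equation \eqref{eqn-1.i}.

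For (2) and (3) I would complexify $TM$ and set $\partial_{z_j}:=\tfrac12(e_j-\sqrt{-1}f_j)\in T^{1,0}$, $\partial_{\bar z_j}:=\tfrac12(e_j+\sqrt{-1}f_j)\in T^{0,1}$. A short computation from Definition \ref{defn-3.1} gives $\nabla_{e_i}\partial_{z_j}=\Theta_{ijk}\partial_{z_k}$ and $\nabla_{f_i}\partial_{z_j}=\sqrt{-1}\,\Theta_{ijk}\partial_{z_k}$, hence
$$\nabla_{\partial_{z_i}}\partial_{z_j}=\Theta_{ijk}\partial_{z_k},\qquad\nabla_{\partial_{\bar z_i}}\partial_{z_j}=0.$$
By (1), $\mathcal{R}(x,y)$ commutes with $J$, so it preserves $T^{1,0}$ and $T^{0,1}$; and since $\mathcal{R}$ is the complex-bilinear extension of a real operator, its restriction to $T^{0,1}$ is the complex conjugate of its restriction to $T^{1,0}$, so it suffices to compute $\mathcal{R}(\,\cdot\,,\,\cdot\,)\partial_{z_j}$. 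As all the coordinate fields commute, the displayed formulas give
$$\mathcal{R}(\partial_{\bar z_i},\partial_{\bar z_\ell})\partial_{z_j}=0,\qquad\mathcal{R}(\partial_{z_i},\partial_{\bar z_\ell})\partial_{z_j}=-(\partial_{\bar z_\ell}\Theta_{ijk})\,\partial_{z_k},$$
$$\mathcal{R}(\partial_{z_i},\partial_{z_\ell})\partial_{z_j}=(\partial_{z_i}\Theta_{\ell jk}-\partial_{z_\ell}\Theta_{ijk})\,\partial_{z_k}+(\Theta_{\ell jk}\Theta_{ikn}-\Theta_{ijk}\Theta_{\ell kn})\,\partial_{z_n}.$$

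It then remains to match $\mathfrak{K}^\pm$ with these components. Writing $x,y$ in terms of their $T^{1,0}$- and $T^{0,1}$-parts and using $J=\sqrt{-1}$ on $T^{1,0}$, $J=-\sqrt{-1}$ on $T^{0,1}$, one sees that in $\mathcal{R}(Jx,Jy)$ the $(2,0)$- and $(0,2)$-slots acquire a factor $-1$ and the $(1,1)$-slot a factor $+1$; hence $R\in\mathfrak{K}^-$ precisely when the $(1,1)$-component of the $\operatorname{End}(V)$-valued $2$-form $\mathcal{R}$ vanishes, and $R\in\mathfrak{K}^+$ precisely when its $(2,0)$- and $(0,2)$-components vanish. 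If $\Theta$ is holomorphic then $\partial_{\bar z_\ell}\Theta_{ijk}=0$, so by the second display the $(1,1)$-part of $\mathcal{R}$ vanishes and $R\in\mathfrak{K}^-$, which is (2). If $\Theta$ is anti-holomorphic then $\partial_{z_i}\Theta_{\ell jk}=0$, so the linear term in the third display drops out; if moreover $\Theta(0)=0$, the remaining quadratic term vanishes at the origin, and together with the first display (and the conjugate identities on $T^{0,1}$) this shows the $(2,0)$- and $(0,2)$-parts of $\mathcal{R}$ vanish at $0$, so $R(0)\in\mathfrak{K}^+$, which is (3).

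The only step that is more than bookkeeping is the translation in the last paragraph between the conditions defining $\mathfrak{K}^\pm$ and the vanishing of Dolbeault components of $\mathcal{R}$; the decisive simplification, which turns both (2) and (3) into one-liners once the connection is written in complex coordinates, is the observation from (1) that $\mathcal{R}$ is complex linear, so only its restriction to $T^{1,0}$ must be computed. I would also flag that in (3) the quadratic Christoffel term is not globally zero, so the conclusion there is genuinely only a pointwise statement at the origin.
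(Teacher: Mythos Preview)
Your argument is correct. Part (1) matches the paper's treatment. For (2) and (3) your route differs from the paper's in execution, though the underlying idea---exploit the complex linearity of $\nabla$---is the same.

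The paper stays largely in real coordinates: it first assumes $\Theta(0)=0$, writes out all twelve components $A(e_i,e_j,e_k,e_l),\,A(e_i,e_j,e_k,f_l),\ldots$ explicitly in terms of $e_iu_{jkl}$, $f_iv_{jkl}$, etc., and then reads off the $\mathfrak{K}^\pm$ conclusions from the real Cauchy--Riemann equations $e_iu=\pm f_iv$, $e_iv=\mp f_iu$. For the holomorphic case without the hypothesis $\Theta(0)=0$, the paper then treats the quadratic Christoffel term separately, observing via the ``complex curvature'' $R^c$ that this term always lies in $\mathfrak{K}^-$. Your approach complexifies at the outset, reduces to the single formula $\nabla_{\partial_{z_i}}\partial_{z_j}=\Theta_{ijk}\partial_{z_k}$ with $\nabla_{\partial_{\bar z_i}}\partial_{z_j}=0$, and then identifies $\mathfrak{K}^\pm$ with the vanishing of the $(1,1)$- respectively $(2,0)\oplus(0,2)$-parts of the $\operatorname{End}$-valued curvature $2$-form. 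This packages the linear and quadratic contributions uniformly and eliminates the twelve-entry table; the paper's version, on the other hand, produces those explicit real formulas which are reused verbatim in the computations of Section~\ref{sect-4}. Both arguments are complete; yours is more conceptual, the paper's more directly feeds the later examples.
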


\begin{proof} Since $u_{ijk}=u_{jik}$ and $v_{ijk}=v_{jik}$, $\nabla$ is torsion free. We may use $J$ to identify $\mathbb{R}^{m}$
with
$\mathbb{C}^{\bar m}$. Let $\nabla^c_{e_i}e_j=\Theta_{ijk}e_k$. If we extend $\nabla^c$ to be
complex linear, then $\nabla$ is the underlying real connection. Thus $\nabla J=J\nabla$ so the
connection is K\"ahler.

If $\Theta(0)=0$, then the curvature is given by $d\Theta$. By making a complex linear change of basis if necessary, we may assume
without loss of generality that the basis is orthonormal at the origin. Let
$A=R(0)$. We compute:
\medbreak\quad
$\begin{array}{ll}
A(e_i,e_j,e_k,e_l)=e_iu_{jkl}-e_ju_{ikl},&
A(e_i,e_j,f_k,f_l)=e_iu_{jkl}-e_ju_{ikl},\gronko\\
A(f_i,f_j,e_k,e_l)=-f_iv_{jkl}+f_jv_{ikl},&
A(f_i,f_j,f_k,f_l)=-f_iv_{jkl}+f_jv_{ikl},\gronko\\
A(e_i,e_j,e_k,f_l)=e_iv_{jkl}-e_jv_{ikl},&
A(e_i,e_j,f_k,e_l)=-e_iv_{jkl}+e_jv_{ikl},\gronko\\
A(f_i,f_j,e_k,f_l)=f_iu_{jkl}-f_ju_{ikl},&
A(f_i,f_j,f_k,e_l)=-f_iu_{jkl}+f_ju_{ikl},\gronko\\
A(e_i,f_j,e_k,e_l)=-e_iv_{jkl}-f_ju_{ikl},&
A(e_i,f_j,f_k,f_l)=-e_iv_{jkl}-f_ju_{ikl},\gronko\\
A(e_i,f_j,e_k,f_l)=e_iu_{jkl}-f_jv_{ikl},&
A(e_i,f_j,f_k,e_l)=-e_iu_{jkl}+f_jv_{ikl}.\gronko
\end{array}$
\medbreak\noindent If $\Theta$ is holomorphic, then $e_iu=f_iv$ and $e_iv=-f_iu$; it then follows $A\in\mathfrak{K}^-$. If
$\Theta$ is anti-holomorphic, then $e_iu=-f_iv$ and $e_iv=f_iu$; it then follows that $A\in\mathfrak{K}^+$. We complete the
proof by assuming that
$d\Theta(0)=0$ and by studying the quadratic terms. Let $R^c$ be the complex curvature. Then:
$$
R^c(e_i,e_j)e_k=\sum_{a,l}\{\Theta(e_i,e_a,e_l)\Theta(e_j,e_k,e_a)-\Theta(e_j,e_a,e_l)\Theta(e_i,e_k,e_a)\}\,.$$
From this it is clear that $R^c(f_i,f_j)=-R^c(e_i,e_j)$ and $R^c(e_i,f_j)=-R^c(Je_i,Jf_j)$. Disentangling the real and imaginary
parts of these operators then shows that $R\in\mathfrak{K}^-$.
\end{proof}
This construction provides K\"ahler connections whose curvature tensors lie in $\mathfrak{K}^+$ at a single point, but not globally.
It also provides K\"ahler connections whose curvature tensors always lie in $\mathfrak{K}^-$.

\section{The proof of Theorem \ref{thm-1.8}}\label{sect-4}
Let $\mathfrak{J}$ be the linear subspace of $\mathfrak{K}$ consisting of all curvature tensors which arise from the
construction given in Lemma \ref{lem-3.1} where $\Theta(0)=0$, and let
$\mathfrak{J}^+$ (resp. $\mathfrak{J}^-$) be the subspaces defined by $\Theta$ anti-holomorphic (resp. holomorphic). These are
clearly $\mathcal{U}$ sub-modules. We will complete the proof of Theorem \ref{thm-1.8} by showing
$\mathfrak{K}^\pm=\mathfrak{J}^\pm$. We begin by
establishing the following Lemma:

\begin{lemma}\
\begin{enumerate}
\item $\tau\oplus\check\tau_J:\mathfrak{J}^+\rightarrow\mathbb{R}\oplus\mathbb{R}\rightarrow0$.
\item $\rho_{14}:\mathfrak{J}^-\rightarrow S_-^2\oplus\Lambda_-^2\rightarrow0$.
\item $\rho_{14}\oplus\rho_{13}:\mathfrak{J}^+\rightarrow
S_{0,+}^2\oplus\Lambda_{0,+}^2\oplus S_{0,+}^2\oplus\Lambda_{0,+}^2\rightarrow0$.
\item $W_1\oplus W_3\oplus W_5\oplus W_6\oplus W_7\oplus
W_8\subset\mathfrak{J}^+$ and $W_2\oplus W_4\subset\mathfrak{J}^-$.
\end{enumerate}\end{lemma}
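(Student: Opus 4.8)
The plan is to compute the various Ricci-type traces directly on the curvature tensor $A=R(0)=d\Theta(0)$ produced by Definition \ref{defn-3.1}, using the explicit table of components in the proof of Lemma \ref{lem-3.1}, and to read off that the appropriate trace maps are surjective. Assertion (4) will then follow from (1)--(3) together with the definitions of the modules $W_1,\dots,W_8$ as the summands on which these traces are injective (or, for $W_5\oplus W_6$, where $\tau\oplus\check\tau_J$ is an isomorphism).

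First I would treat the holomorphic case for Assertion (2). Here $\Theta=u+\sqrt{-1}v$ is holomorphic, so the relations $e_iu=f_iv$, $e_iv=-f_iu$ hold at $0$, and $A\in\mathfrak{K}^-$; by Lemma \ref{lem-2.1}(3) we have $\rho_{13}=0$ and $J^*\rho_{14}=-\rho_{14}$, so the target is exactly $S^2_-\oplus\Lambda^2_-$. To show surjectivity I would exhibit, for a given $\theta\in S^2_-\oplus\Lambda^2_-$, an explicit holomorphic $\Theta$ with $\Theta(0)=0$ whose first-order data realizes $\rho_{14}=\theta$: take $\Theta$ with linear coefficients, i.e.\ $u_{ijk}$ and $v_{ijk}$ homogeneous linear in the coordinates with $e_lu_{ijk}$, $f_lu_{ijk}$ etc.\ prescribed constants subject only to the holomorphicity constraints and the symmetry $u_{ijk}=u_{jik}$, $v_{ijk}=v_{jik}$. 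Using the component formulas, $\rho_{14}(e_k,e_l)$, $\rho_{14}(e_k,f_l)$, $\rho_{14}(f_k,f_l)$ become explicit linear combinations of these constants; a dimension/rank count (or an explicit choice of the free parameters) shows every $\theta$ with the correct $J$-symmetry type arises. The anti-holomorphic case for Assertions (1) and (3) is parallel: now $A\in\mathfrak{K}^+$, Lemma \ref{lem-2.1}(2) gives $J^*\rho_{13}=\rho_{13}$, $J^*\rho_{14}=\rho_{14}$, so the traces land in $\mathbb{R}\oplus\mathbb{R}\oplus S^2_{0,+}\oplus\Lambda^2_{0,+}\oplus S^2_{0,+}\oplus\Lambda^2_{0,+}$, and one again writes down linear anti-holomorphic $\Theta$ realizing prescribed values of $\tau$, $\check\tau_J$, $\rho_{13}$, $\rho_{14}$.

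For Assertion (4), recall that $W_5\oplus W_6$ was defined so that $\tau\oplus\check\tau_J$ restricts to an isomorphism onto $\mathbb{R}\oplus\mathbb{R}$, and $W_1,W_3,W_7,W_8$ so that $\rho_{14}^+\oplus\rho_{13}^+$ restricts to an isomorphism onto $S^2_{0,+}\oplus\Lambda^2_{0,+}\oplus S^2_{0,+}\oplus\Lambda^2_{0,+}$, while $W_2,W_4$ play the same role for $\rho_{14}^-$ on $S^2_-\oplus\Lambda^2_-$. Since $\mathfrak{J}^+$ is a $\mathcal{U}$-submodule of $\mathfrak{K}^+$ and, by (1) and (3), the map $\tau\oplus\check\tau_J\oplus\rho_{14}^+\oplus\rho_{13}^+$ is already surjective when restricted to $\mathfrak{J}^+$, the natural splitting of short exact sequences of unitary modules forces $W_5\oplus W_6\oplus W_1\oplus W_3\oplus W_7\oplus W_8\subset\mathfrak{J}^+$; likewise $W_2\oplus W_4\subset\mathfrak{J}^-$ from (2). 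The main obstacle is the bookkeeping in the surjectivity arguments: one must verify that the linear-coefficient ansatz for $\Theta$, after imposing the symmetry $u_{ijk}=u_{jik}$, $v_{ijk}=v_{jik}$ and the (anti-)holomorphicity relations, still leaves enough freedom to hit an arbitrary target Ricci tensor of the prescribed $J$-symmetry type — equivalently, that the explicit linear map from the coefficients of $\Theta$ to $(\tau,\check\tau_J,\rho_{13},\rho_{14})$ has the full expected rank. I expect this to reduce to a short, checkable computation (e.g.\ picking $\Theta$ supported on one or two index triples at a time), analogous to the trace computations already carried out in the proof of Lemma \ref{lem-2.1}.
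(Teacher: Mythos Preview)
Your strategy for Assertions (1)--(3) is essentially the paper's: produce explicit linear (anti-)holomorphic data $\Theta$ vanishing at the origin, compute the resulting curvature and Ricci tensors from the tables in Lemma~\ref{lem-3.1}, and hit each irreducible summand of the target. The paper carries this out by writing down specific $\Theta$'s supported on one or two index triples (exactly as you anticipate in your last paragraph), rather than by a general rank count; since each target summand is $\mathcal{U}$-irreducible, a single nonzero hit per summand suffices, which is slightly cleaner than arranging to realize an arbitrary $\theta$.

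There is, however, a genuine gap in your argument for Assertion~(4). From surjectivity of $T:=\tau\oplus\check\tau_J\oplus\rho_{14}^+\oplus\rho_{13}^+$ on $\mathfrak{J}^+$ and the unitary splitting, you only obtain a $\mathcal{U}$-equivariant section $s$ whose image is a submodule of $\mathfrak{J}^+$ \emph{isomorphic} to $W_1\oplus W_3\oplus W_5\oplus W_6\oplus W_7\oplus W_8$; this need not be that specific submodule of $\mathfrak{K}^+$. Concretely, if some $W_j$ with $j\ge 9$ in $\ker T$ were isomorphic to, say, $W_1$, then $\mathfrak{J}^+$ could contain a ``diagonal'' copy $\{v+\phi(v):v\in W_1\}\subset W_1\oplus W_j$ on which $T$ is still surjective, yet $W_1\not\subset\mathfrak{J}^+$. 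The paper closes this gap by invoking the dimension table in Remark~\ref{rmk-2.4}: none of $W_9,W_{10},W_{11}$ (resp.\ $W_{12}$) is isomorphic to any of $W_1,\dots,W_8$ (resp.\ $W_2,W_4$), so the relevant isotypic components of $\mathfrak{K}^\pm$ are exactly $W_1\oplus\cdots\oplus W_8$ (resp.\ $W_2\oplus W_4$), on which $T$ is an isomorphism; any submodule surjecting under $T$ must then contain the full isotypic components. You should add this step.
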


\begin{proof}We apply Lemma \ref{lem-3.1}. We establish Assertion (1) by
taking:
$$
\Theta_{111}=\varrho_1(x_1-\sqrt{-1}y_1)\quad\text{and}\quad\Theta_{122}=\Theta_{212}=\varrho_2(y_1+\sqrt{-1}x_1)\,.
$$
Let $\mathcal{A}:=\mathcal{R}(0)$. We may then establish Assertion (1) by computing:
\medbreak\quad$\begin{array}{ll}
\nabla_{e_1}e_1=-\nabla_{f_1}f_1=\varrho_1(x_1e_1-y_1f_1),&
\nabla_{e_1}f_1=\nabla_{f_1}e_1=\varrho_1(y_1e_1+x_1f_1),\gronko\\
\nabla_{e_1}e_2=-\nabla_{f_1}f_2=\varrho_2(y_1e_2+x_1f_2),&
\nabla_{e_2}e_1=-\nabla_{f_2}f_1=\varrho_2(y_1e_2+x_1f_2)\gronko,\\
\nabla_{e_1}f_2=\nabla_{f_1}e_2=\varrho_2(-x_1e_2+y_1f_2),&
\nabla_{e_2}f_1=\nabla_{f_2}e_1=\varrho_2(-x_1e_2+y_1f_2),\gronko
\end{array}$\smallbreak\quad$\begin{array}{ll}
\mathcal{A}(e_1,f_1)e_1=2\varrho_1f_1,\phantom{.........................}&
\mathcal{A}(e_1,f_1)f_1=-2\varrho_1e_1,\gronko\\
\mathcal{A}(e_1,e_2)e_1=-\mathcal{A}(e_1,f_2)f_1=\varrho_2f_2,&
\mathcal{A}(e_1,e_2)f_1=\mathcal{A}(e_1,f_2)e_1=-\varrho_2e_2,\gronko\\
\mathcal{A}(f_1,e_2)e_1=-\mathcal{A}(f_1,f_2)f_1=\varrho_2e_2,&
\mathcal{A}(f_1,e_2)f_1=\mathcal{A}(f_1,f_2)e_1=\varrho_2f_2,\gronko\\
\mathcal{A}(e_1,f_1)f_2=-2\varrho_2f_2,&
\mathcal{A}(e_1,f_1)e_2=-2\varrho_2e_2,\gronko
\end{array}$\smallbreak\quad$\begin{array}{ll}
\rho_{14}(e_1,e_1)=\rho_{14}(f_1,f_1)=-2\varrho_1,\phantom{......}&
\rho_{14}(e_1,f_1)=-\rho_{14}(f_1,e_1)=2\varrho_2,\gronko\\
\tau=-4\varrho_1,&\check\tau_J=-4\varrho_2.\end{array}$

\medbreak Next take $\Theta_{111}=\varrho_1(x_2+\sqrt{-1}y_2)$ and $\Theta_{222}=\varrho_2(x_1+\sqrt{-1}y_1)$. Again, let
$\mathcal{A}:=\mathcal{R}(0)$. Then:
\medbreak\quad$\begin{array}{ll}
\nabla_{e_1}e_1=-\nabla_{f_1}f_1=\varrho_1(x_2e_1+y_2f_1),&
\nabla_{e_1}f_1=\nabla_{f_1}e_1=\varrho_1(-y_2e_1+x_2f_1),\\
\nabla_{e_2}e_2=-\nabla_{f_2}f_2=\varrho_2(x_1e_2+y_1f_2),&
\nabla_{e_2}f_2=\nabla_{f_2}e_2=\varrho_2(-y_1e_2+x_1f_2),\gronko
\end{array}$\smallbreak\quad$\begin{array}{ll}
\mathcal{A}(e_2,e_1)e_1=-\mathcal{A}(e_2,f_1)f_1=\varrho_1e_1,\phantom{a}&
\mathcal{A}(f_2,e_1)e_1=-\mathcal{A}(f_2,f_1)f_1=\varrho_1f_1,\gronko\\
\mathcal{A}(e_2,e_1)f_1=\mathcal{A}(e_2,f_1)e_1=\varrho_1f_1,&
\mathcal{A}(f_2,e_1)f_1=\mathcal{A}(f_2,f_1)e_1=-\varrho_1e_1,\gronko\\
\mathcal{A}(e_1,e_2)e_2=-\mathcal{A}(e_1,f_2)f_2=\varrho_2e_2,&
\mathcal{A}(f_1,e_2)e_2=-\mathcal{A}(f_1,f_2)f_2=\varrho_2f_2,\gronko\\
\mathcal{A}(e_1,e_2)f_2=\mathcal{A}(e_1,f_2)e_2=\varrho_2f_2,&
\mathcal{A}(f_1,e_2)f_2=\mathcal{A}(f_1,f_2)e_2=-\varrho_2e_2,\gronko
\end{array}$\smallbreak\quad$\begin{array}{ll}
\rho_{14}(e_2,e_1)=-2\varrho_1,\qquad\qquad\qquad\phantom{......}&\rho_{14}(f_2,f_1)=2\varrho_1,\gronko\\
\rho_{14}(e_1,e_2)=-2\varrho_2,&\rho_{14}(f_1,f_2)=2\varrho_2.\gronko
\end{array}$
\medbreak\noindent
If we take $\varrho_1=\varrho_2$, then $\rho_{14}\in S_-^2$; if we take $\varrho_1=-\varrho_2$, then $\rho_{14}\in\Lambda_-^2$.
This proves Assertion
(2).

We begin the proof of Assertion (3) by taking:
\begin{eqnarray*}
&&\Theta_{111}=\varrho_1(x_1-\sqrt{-1}y_1)+\varrho_2(x_2-\sqrt{-1}y_2),\\
&&\Theta_{222}=\varrho_3(x_2-\sqrt{-1}y_2)+\varrho_4(x_1-\sqrt{-1}y_1)\,.
\end{eqnarray*}
We then have
\medbreak\qquad
$\nabla_{e_1}e_1=-\nabla_{f_1}f_1=(\varrho_1x_1+\varrho_2x_2)e_1-(\varrho_1y_1+\varrho_2y_2)f_1$,
\smallbreak\qquad
$\nabla_{e_1}f_1=\nabla_{f_1}e_1=(\varrho_1y_1+\varrho_2y_2)e_1+(\varrho_1x_1+\varrho_2x_2)f_1$,
\smallbreak\qquad
$\nabla_{e_2}e_2=-\nabla_{f_2}f_2=(\varrho_3x_2+\varrho_4x_1)e_2-(\varrho_3y_2+\varrho_4y_1)f_2$,
\smallbreak\qquad
$\nabla_{e_2}f_2=\nabla_{f_2}e_2=(\varrho_3y_2+\varrho_4y_1)e_2+(\varrho_3x_2+\varrho_4x_1)f_2$.
\medbreak\noindent Let $\mathcal{A}:=\mathcal{R}(0)$. Then:
\medbreak\quad$\begin{array}{ll}
\mathcal{A}(e_1,f_1)f_1=-2\varrho_1e_1,&
\mathcal{A}(e_1,f_1)e_1=2\varrho_1f_1,\gronko\\
\mathcal{A}(e_2,e_1)e_1=-\mathcal{A}(e_2,f_1)f_1=\varrho_2e_1,&
\mathcal{A}(e_2,e_1)f_1=\mathcal{A}(e_2,f_1)e_1=\varrho_2f_1,\gronko\\
\mathcal{A}(f_2,e_1)e_1=-\mathcal{A}(f_2,f_1)f_1=-\varrho_2f_1,&
\mathcal{A}(f_2,e_1)f_1=\mathcal{A}(f_2,f_1)e_1=\varrho_2e_1,\gronko\\
\mathcal{A}(e_2,f_2)f_2=-2\varrho_3e_2,&
\mathcal{A}(e_2,f_2)e_2=2\varrho_3f_2,\gronko\\
\mathcal{A}(e_1,e_2)e_2=-\mathcal{A}(e_1,f_2)f_2=\varrho_4e_2,&
\mathcal{A}(e_1,e_2)f_2=\mathcal{A}(e_1,f_2)e_2=\varrho_4f_2,\gronko\\
\mathcal{A}(f_1,e_2)e_2=-\mathcal{A}(f_1,f_2)f_2=-\varrho_4f_2,&
\mathcal{A}(f_1,e_2)f_2=\mathcal{A}(f_1,f_2)e_2=\varrho_4e_2,\gronko
\end{array}$
\smallbreak\quad$\begin{array}{ll}
\rho_{14}(e_1,e_1)=\rho_{14}(f_1,f_1)=-2\varrho_1,\qquad&
\rho_{14}(e_1,e_2)=\rho_{14}(f_1,f_2)=-2\varrho_4,\gronko\\
\rho_{14}(e_2,e_2)=\rho_{14}(f_2,f_2)=-2\varrho_3,&
\rho_{14}(e_2,e_1)=\rho_{14}(f_2,f_1)=-2\varrho_2,\gronko\\
\rho_{13}(e_1,e_1)=\rho_{13}(f_1,f_1)=2\varrho_1,&
\rho_{13}(e_1,e_2)=\rho_{13}(f_1,f_2)=0,\gronko\\
\rho_{13}(e_2,e_2)=\rho_{13}(f_2,f_2)=2\varrho_3,&
\rho_{13}(e_2,e_1)=\rho_{13}(f_2,f_1)=0.\gronko
\end{array}$
\medbreak\noindent Note that $\tau=-4\varrho_1-4\varrho_3$ and $\check\tau_J=0$.\begin{enumerate}
\item
Take $\vec\varrho=(0,1,0,1)$ to construct $\nabla$ with $0\ne\rho_{14}(A)\in S_{0,+}^2$
and $\rho_{13}=0$.
\item Take $\vec\varrho=(0,1,0,-1)$ to construct $\nabla$ with $0\ne\rho_{14}(A)\in\Lambda_{0,+}^2$ and $\rho_{13}=0$.
\item Take $\vec\varrho=(1,0,-1,0)$ to construct $\nabla$ with $0\ne\rho_{13}(A)\in S_{0,+}^2$.
\end{enumerate} Thus we complete the proof of
Assertion (3) by constructing an example where $\rho_{13}$ has a non-zero component in $\Lambda_{0,+}^2$. We take
$$\Theta_{122}=\Theta_{212}=\varrho_5(x_2-\sqrt{-1}y_2)\,.$$
Set $\mathcal{A}:=\mathcal{R}(0)$. We have:
\medbreak\quad$\begin{array}{ll}
\nabla_{e_1}e_2=-\nabla_{f_1}f_2=\varrho_5(x_2e_2-y_2f_2),&
\nabla_{e_1}f_2=\nabla_{f_1}e_2=\varrho_5(y_2e_2+x_2f_2),\gronko\\
\nabla_{e_2}e_1=-\nabla_{f_2}f_1=\varrho_5(x_2e_2-y_2f_2),&
\nabla_{f_2}e_1=\nabla_{e_2}f_1=\varrho_5(y_2e_2+x_2f_2),\gronko
\end{array}$
\smallbreak\quad$\begin{array}{ll}
\mathcal{A}(e_2,e_1)e_2=-\mathcal{A}(e_2,f_1)f_2=\varrho_5e_2,\phantom{.}&
\mathcal{A}(f_2,e_1)e_2=-\mathcal{A}(f_2,f_1)f_2=-\varrho_5f_2,\gronko\\
\mathcal{A}(e_2,e_1)f_2=\mathcal{A}(e_2,f_1)e_2=\varrho_5f_2,&
\mathcal{A}(f_2,e_1)f_2=\mathcal{A}(f_2,f_1)e_2=\varrho_5e_2,\gronko\\
\mathcal{A}(e_2,f_2)e_1=2\varrho_5f_2,&\mathcal{A}(e_2,f_2)f_1=-2\varrho_5e_2,\gronko
\end{array}$\smallbreak\quad$\begin{array}{ll}\varrho_{13}(e_1,e_2)=2\varrho_5,\qquad\qquad\qquad\qquad\phantom{.}&\varrho_{13}(f_1,f_2)=2\varrho_5.\gronko
\end{array}$
\medbreak\noindent
This belongs to $S_{0,+}^2\oplus\Lambda_{0,+}^2$. It is not symmetric and thus has a non-zero component in
$\Lambda_{0,+}^2$.  Assertion (3) follows.

Since the modules in question are irreducible, Assertions (1)-(3) show the maps of Lemma \ref{lem-2.2} define
surjective maps
\begin{eqnarray*}
&&\rho_{14}:\mathfrak{J}^-\rightarrow W_2\oplus W_4\rightarrow0,\\
&&\tau\oplus\check\tau_J\oplus\rho_{14}\oplus\rho_{13}:\mathfrak{J}^+\rightarrow W_1\oplus W_3\oplus W_5\oplus
W_6\oplus W_7\oplus W_8\rightarrow0\,.
\end{eqnarray*}
We consider the  collection of modules 
\begin{eqnarray*}
&&\mathcal{C}_1:=\{W_2,W_4\},\\
&&\mathcal{C}_2:=\{W_1,W_3,W_5,W_6,W_7,W_8\},\\
&&\mathcal{C}_3:=\{W_9,W_{10},W_{11},W_{12}\}
\end{eqnarray*}  where we omit $W_{11}$ and
$W_{12}$ if $m=4$. By Remark \ref{rmk-2.4}, no module from collection
$\mathcal{C}_i$ is isomorphic to any module from
$\mathcal{C}_j$ for
$i\ne j$ on dimensional grounds. Assertion (4) now follows from Theorem \ref{thm-2.3}.
\end{proof}

We complete the proof of Theorem \ref{thm-1.8} by establishing:
\begin{lemma}\label{lem-4.2}
\ \begin{enumerate}
\item If $m\ge6$, then $W_{12}\cap\mathfrak{J}^-\ne\{0\}$.
\item $W_{9}\cap\mathfrak{J}^+\ne\{0\}$.
\item $W_{10}\cap\mathfrak{J}^+\ne\{0\}$.
\item If $m\ge 6$, then $W_{11}\cap\mathfrak{J}^+\ne\{0\}$.
\end{enumerate}\end{lemma}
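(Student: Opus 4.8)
The plan is to complete Lemma 4.1 by exhibiting, for each of the modules $W_9$, $W_{10}$, $W_{11}$, $W_{12}$, an explicit linear $\Theta$ of the type in Definition 3.1 with $\Theta(0)=0$ whose curvature at the origin is a nonzero element of that module. Since $\mathfrak{J}^{\pm}$ are $\mathcal{U}$-submodules of $\mathfrak{K}^{\pm}$ and, by Theorem 2.3, each $W_i$ is an irreducible $\mathcal{U}$-module, producing one such nonzero element forces $W_i\subseteq\mathfrak{J}^{+}$ (resp. $\mathfrak{J}^{-}$). Combined with Lemma 4.1 and the direct sum decomposition of Theorem 2.3, this yields $\mathfrak{K}^{\pm}=\mathfrak{J}^{\pm}$, hence $\mathfrak{K}=\mathfrak{J}$; and since every element of $\mathfrak{J}$ is realized by an affine K\"ahler connection by construction, Theorem 1.8 follows. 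Observe that because $\Theta(0)=0$ the quadratic contribution to the curvature vanishes at the origin, so $\mathcal{R}(0)=d\Theta(0)$ depends only on the linear part of $\Theta$; thus it costs nothing to take $\Theta$ linear, and the relevant curvature components are read directly off the table in the proof of Lemma 3.1.

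For $W_9$ and $W_{10}$ I would take an anti-holomorphic linear $\Theta$ supported on two complex coordinates, chosen by the same trial-and-error as in Lemma 4.1, and set $A:=\mathcal{R}(0)$. By Lemma 3.1 we get $A\in\mathfrak{K}^{+}$ automatically, so it remains only to arrange $A\ne 0$, $\rho_{14}(A)=0$, and $A(x,y,z,w)=-A(x,y,w,z)$ for $W_9$ (resp. $A(x,y,z,w)=A(x,y,w,z)$ for $W_{10}$); these are precisely the conditions defining $W_9$ (resp. $W_{10}$), so $A$ lands in the desired module. For $W_{12}$, which requires $m\ge 6$, I would take instead a holomorphic linear $\Theta$ supported on three complex coordinates with $A:=\mathcal{R}(0)\ne 0$ and $\rho_{14}(A)=0$; by Lemma 3.1, $A\in\mathfrak{K}^{-}$, and since Lemma 2.2(2) exhibits $\rho_{14}$ as an isomorphism of $W_2\oplus W_4$ onto $S_{-}^2\oplus\Lambda_{-}^2$ while $\mathfrak{K}^{-}=W_2\oplus W_4\oplus W_{12}$, it follows that $A\in W_{12}$. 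The delicate point here is to choose $\Theta$ cyclically enough that the trace $\rho_{14}$ vanishes while $A$ itself does not, which is why three complex dimensions are needed.

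The remaining case, $W_{11}$ (again $m\ge 6$), is the main obstacle. Here I would take an anti-holomorphic linear $\Theta$ using three complex coordinates with $A:=\mathcal{R}(0)\ne 0$ and $\rho_{14}(A)=\rho_{13}(A)=0$, so that $A$ lies in $\ker\rho_{14}\cap\ker\rho_{13}\cap\mathfrak{K}^{+}=W_9\oplus W_{10}\oplus W_{11}$ (using Lemma 2.2 and Theorem 2.3); the extra work is to certify that $A$ has a nonzero $W_{11}$-component, i.e. that $A\notin W_9\oplus W_{10}$. The cleanest route is to note that if $A=B+C$ with $B\in W_9$ and $C\in W_{10}$, then, by uniqueness of the decomposition into parts antisymmetric and symmetric in the last two arguments, $B$ and $C$ are exactly those parts of $A$, and in particular both lie in $\mathfrak{K}$; hence it suffices to arrange $\Theta$ so that, say, the antisymmetrization of $A$ in its last two slots violates the first Bianchi identity. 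Then irreducibility of $W_{11}$ gives $W_{11}\subseteq\mathfrak{J}^{+}$, finishing the proof; for $m=4$ one has $W_{11}=W_{12}=\{0\}$ and nothing beyond Lemma 4.1 and assertions (2)--(3) is needed. I expect the bookkeeping of the Ricci contractions in three complex dimensions, together with pinning down the explicit Bianchi-failure that witnesses $A\notin W_9\oplus W_{10}$, to be the most laborious steps.
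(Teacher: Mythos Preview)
Your plan matches the paper's proof essentially point for point: holomorphic linear $\Theta$ on three complex coordinates with $\rho_{14}=0$ for $W_{12}$; anti-holomorphic linear $\Theta$ on three complex coordinates with $\rho_{13}=\rho_{14}=0$ for $W_{11}$, followed by the Bianchi-failure argument on the (anti)symmetrization in the last two slots to rule out $W_9\oplus W_{10}$; and anti-holomorphic linear $\Theta$ on two complex coordinates for $W_9$ and $W_{10}$. The only implementation nuance is that for $W_9$ and $W_{10}$ the paper does not find a single $\Theta$ landing directly in the module, but instead builds an element $A_1$ with the correct (anti)symmetry in the last two slots yet $\rho_{14}\ne0$, produces a companion $A_2$ by the unitary swap of the two complex coordinates, and then takes $A_1\mp A_2\in\mathfrak{J}^+$ to cancel $\rho_{14}$; since $\mathfrak{J}^+$ is a linear $\mathcal{U}$-submodule this is equivalent to your ``single $\Theta$'' formulation.
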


\begin{proof} Set $\Theta_{112}=x_3+\sqrt{-1}y_3$. We then have
\medbreak\quad
$\begin{array}{ll}
\nabla_{e_1}e_1=-\nabla_{f_1}f_1=x_3e_2+y_3f_2,&
\nabla_{e_1}f_1=\nabla_{f_1}e_1=-y_3e_2+x_3f_2,\gronko\\
\mathcal{A}(e_3,e_1)e_1=-\mathcal{A}(e_3,f_1)f_1=e_2,&
\mathcal{A}(f_3,e_1)e_1=-\mathcal{A}(f_3,f_1)f_1=f_2,\gronko\\
\mathcal{A}(e_3,e_1)f_1=\mathcal{A}(e_3,f_1)e_1=f_2,&
\mathcal{A}(f_3,e_1)f_1=\mathcal{A}(f_3,f_1)e_1=-e_2\,.\gronko
\end{array}$
\medbreak\noindent Since $\rho_{14}=0$ and $\mathcal{A}\in\mathfrak{J}^-$, $0\ne\mathcal{A}\in W_{12}$; this establishes Assertion
(1).

We clear the previous notation and take:
\medbreak\qquad
$\begin{array}{ll}
\Theta_{112}=\varrho_1(x_1-\sqrt{-1}y_1),&
\Theta_{111}=\varrho_3(x_2-\sqrt{-1}y_2),\\
\Theta_{121}=\Theta_{211}=\varrho_2(x_1-\sqrt{-1}y_1).\gronko
\end{array}$
\medbreak\noindent Consequently we have:
\medbreak\qquad
$\nabla_{e_1}e_1=-\nabla_{f_1}f_1=\varrho_1(x_1e_2-y_1f_2)+\varrho_3(x_2e_1-y_2f_1)$,
\smallbreak\qquad
$\nabla_{f_1}e_1=\nabla_{e_1}f_1=\varrho_1(y_1e_2+x_1f_2)+\varrho_3(y_2e_1+x_2f_1)$,
\smallbreak\qquad
$\nabla_{e_1}e_2=-\nabla_{f_1}f_2=\nabla_{e_2}e_1=-\nabla_{f_2}f_1=\varrho_2(x_1e_1-y_1f_1)$,
\smallbreak\qquad
$\nabla_{f_1}e_2=\nabla_{e_1}f_2=\nabla_{e_2}f_1=\nabla_{f_2}e_1=\varrho_2(y_1e_1+x_1f_1)$.
\medbreak\noindent Set $\mathcal{A}:=\mathcal{R}(0)$. Then:
\smallbreak\quad
$\begin{array}{ll}
\mathcal{A}(e_1,f_1)e_1=2\varrho_1 f_2,&
\mathcal{A}(e_1,f_1)f_1=-2\varrho_1 e_2,\gronko\\
\mathcal{A}(e_1,f_1)e_2=2\varrho_2 f_1,&
\mathcal{A}(e_1,f_1)f_2=-2\varrho_2 e_1,\gronko\\
\mathcal{A}(e_1,e_2)e_1=\varrho_2 e_1-\varrho_3 e_1,&
\mathcal{A}(e_1,e_2)f_1=\varrho_2 f_1-\varrho_3 f_1,\gronko\\
\mathcal{A}(e_1,f_2)e_1=\varrho_2 f_1+\varrho_3 f_1,&
\mathcal{A}(e_1,f_2)f_1=-\varrho_2 e_1-\varrho_3 e_1,\gronko\\
\mathcal{A}(f_1,f_2)e_1=\varrho_2 e_1-\varrho_3 e_1,&
\mathcal{A}(f_1,f_2)f_1=\varrho_2 f_1-\varrho_3f_1,\gronko\\
\mathcal{A}(f_1,e_2)e_1=-\varrho_2f_1-\varrho_3f_1,&\mathcal{A}(f_1,e_2)f_1=\varrho_2 e_1+\varrho_3 e_1.\gronko
\end{array}$
\medbreak\noindent We take $\vec\varrho=(-\frac12,-\frac12,-\frac12)$ to create an element $A_1\in\mathfrak{J}^+$ with:
\medbreak\qquad
$A_1(e_1,f_1,e_1,f_2)=A_1(f_1,e_1,f_1,e_2)=-1$,
\smallbreak\qquad $A_1(f_2,e_1,f_1,e_1)=A_1(e_2,f_1,e_1,f_1)=-1$,
\smallbreak\qquad
$A_1(e_1,f_1,f_2,e_1)=A_1(f_1,e_1,e_2,f_1)=1$,
\smallbreak\qquad $A_1(f_2,e_1,e_1,f_1)=A_1(e_2,f_1,f_1,e_1)=1$,
\smallbreak\qquad
$\rho_{14}(A_1)(e_1,e_2)=\rho_{14}(A_1)(e_2,e_1)=1,$
\smallbreak\qquad$\rho_{14}(A_1)(f_1,f_2)=\rho_{14}(A_1)(f_2,f_1)=1$.
\medbreak\noindent
Interchanging the roles of the indices `1' and `2' then creates a tensor
$A_2\in\mathfrak{J}^+$ such that
\medbreak\qquad
$A_2(e_2,f_2,e_2,f_1)=A_2(f_2,e_2,f_2,e_1)=-1$,
\medbreak\qquad
$A_2(f_1,e_2,f_2,e_2)=A_2(e_1,f_2,e_2,f_2)=-1$,
\smallbreak\qquad
$A_2(e_2,f_2,f_1,e_2)=A_2(f_2,e_2,e_1,f_2)=1$,
\smallbreak\qquad
$A_2(f_1,e_2,e_2,f_2)=A_2(e_1,f_2,f_2,e_2)=1$,
\smallbreak\qquad
$\rho_{14}(A_2)(e_1,e_2)=\rho_{14}(A_2)(e_2,e_1)=1$,
\smallbreak\qquad
$\rho_{14}(A_2)(f_1,f_2)=\rho_{14}(A_2)(f_2,f_1)=1$.\medbreak\noindent
These tensors are anti-symmetric in the last two indices so $\rho_{13}=-\rho_{14}$. We verify that $0\ne A_1-A_2\in
W_9\cap\mathfrak{J}^+$ which establishes Assertion (2).

Next, we take $\varrho=(\frac12,-\frac12,\frac12)$ to create a tensor such that:
\medbreak\qquad
$A_3(e_1,f_1,e_1,f_2)=A_3(e_1,f_1,f_2,e_1)=1$,
\smallbreak\qquad
$A_3(e_1,f_1,f_1,e_2)=A_3(e_1,f_1,e_2,f_1)=A_3(e_1,e_2,f_1,f_1)=-1$,
\smallbreak\qquad
$A_3(e_1,e_2,e_1,e_1)=A_3(f_1,f_2,e_1,e_1)=A_3(f_1,f_2,f_1,f_1)=-1$,
\smallbreak\qquad
$\rho_{14}(A_3)(e_1,e_2)=\rho_{14}(A_3)(f_1,f_2)=1,$
\smallbreak\qquad
$\rho_{14}(A_3)(e_2,e_1)=\rho_{14}(A_3)(f_2,f_1)=-1$.
\medbreak\noindent We interchange the roles of the indices ``1'' and ``2'' to create an element of $\mathfrak{J}^+$
 such that:
\medbreak\qquad
$A_4(e_2,f_2,e_2,f_1)=A_4(e_2,f_2,f_1,e_2)=1$,
\smallbreak\qquad
$A_4(e_2,f_2,f_2,e_1)=A_4(e_2,f_2,e_1,f_2)=A_4(e_2,e_1,f_2,f_2)=-1$,
\smallbreak\qquad
$A_4(e_2,e_1,e_2,e_2)=A_4(f_2,f_1,e_2,e_2)=A_4(f_2,f_1,f_2,f_2)=-1$,
\smallbreak\qquad
$\rho_{14}(A_3)(e_2,e_1)=\rho_{14}(A_3)(f_2,f_1)=1$,
\smallbreak\qquad
$\rho_{14}(A_3)(e_1,e_2)=\rho_{14}(A_3)(f_1,f_2)=-1$.
\medbreak\noindent These two tensors are symmetric in the last two indices so $\rho_{13}=\rho_{14}$. We then have $0\ne A_3+A_4\in
W_{10}\cap\mathfrak{J}^+$ which establishes Assertion (3).

We clear the previous notation and set $\Theta_{112}=x_3-\sqrt{-1}y_3$. Let
$\mathcal{A}=\mathcal{R}(0)$. Then:
\medbreak\quad
$\begin{array}{ll}
\nabla_{e_1}e_1=-\nabla_{f_1}f_1=x_3e_2-y_3f_2,&\nabla_{e_1}f_1=\nabla_{f_1}e_1=y_3e_2+x_3f_2,\\
\mathcal{A}(e_3,e_1)e_1=-\mathcal{A}(e_3,f_1)f_1=e_2,&\mathcal{A}(f_3,e_1)e_1=-\mathcal{A}(f_3,f_1)f_1=-f_2,\\
\mathcal{A}(e_3,e_1)f_1=\mathcal{A}(e_3,f_1)e_1=f_2,&\mathcal{A}(f_3,e_1)f_1=\mathcal{A}(f_3,f_1)e_1=e_2,\\
\rho_{13}(\mathcal{A})=\rho_{14}(\mathcal{A})=0.
\end{array}$
\medbreak\noindent We have $\mathcal{A}\in\mathfrak{J}^+\cap\ker(\rho_{13})\cap\ker(\rho_{14})=W_9\oplus W_{10}\oplus W_{11}$. To
prove Assertion (4), it suffices to show
$\mathcal{A}$ has a non-zero component in
$W_{11}$. Suppose to the contrary that $\mathcal{A}\in W_9\oplus W_{10}$. We may then decompose
$\mathcal{A}=\mathcal{A}_{9}+\mathcal{A}_{10}$ where $\mathcal{A}_9\in W_9$ and $\mathcal{A}_{10}\in W_{10}$. We lower indices to
define $A$, $A_9$, and $A_{10}$. We then have
\begin{eqnarray*}
A(x,y,z,w)+A(x,y,w,z)&=&A_9(x,y,z,w)-A_9(x,y,z,w)\\&+&A_{10}(x,y,z,w)+A_{10}(x,y,w,z)\\
&=&2A_{10}(x,y,z,w)\,.
\end{eqnarray*}
We now compute that:
$$A_{10}(f_3,f_1,e_2,e_1)+A_{10}(f_1,e_2,f_3,e_1)+A_{10}(e_2,f_3,f_1,e_1)=\textstyle\frac12+0+0\ne0\,.$$
This shows that the Bianchi identity; the Lemma now follows from this contradiction.\end{proof}

\begin{remark}
\rm In fact, we have proved just a bit more. We have shown that if $A\in\mathfrak{K}^-$, then $(V,J,A)$ is geometrically
realizable by an affine K\"ahler manifold
$(M,J,\nabla)$ where
$\mathcal{R}\in\mathfrak{K}^-$ at all points of $M$.\end{remark}

\section*{Acknowledgments} Research of M. Brozos-V\'azquez supported by projects MTM2009-07756 and INCITE09 207 151 PR (Spain).
Research of P. Gilkey partially supported by project MTM2009-07756 (Spain).
Research of S. Nik\v cevi\'c partially supported by project MTM2009-07756 (Spain) and by project 144032
(Serbia).

\end{document}